\documentclass[11pt,a4paper]{article}
\usepackage[utf8]{inputenc}
\usepackage[T1]{fontenc}
\usepackage{amsmath,amssymb,amsthm}
\usepackage{mathpazo}
\usepackage[margin=2.5cm]{geometry}
\usepackage{mathrsfs}
\usepackage{booktabs}
\usepackage{microtype}
\usepackage{enumitem}
\usepackage{hyperref}
\hypersetup{colorlinks=true,linkcolor=blue,urlcolor=blue,citecolor=blue}
\usepackage{setspace}
\newtheorem{theorem}{Theorem}[section]
\newtheorem{proposition}[theorem]{Proposition}
\newtheorem{lemma}[theorem]{Lemma}
\newtheorem{corollary}[theorem]{Corollary}

\newtheorem{principle}[theorem]{Principle}
\newtheorem{definition}[theorem]{Definition}
\newtheorem{remark}[theorem]{Remark}
\newtheorem{example}[theorem]{Example}

\newcommand{\E}{\mathrm{E}}

\newcommand{\R}{\mathbb{R}}

\newcommand{\cS}{\mathcal{S}}
\newcommand{\wm}[1]{{}^{(\varphi)}\!m_{#1}}
\newcommand{\wk}[1]{{}^{(\varphi)}\!\kappa_{#1}}
\newcommand{\wg}{{}^{(\varphi)}\!g}
\newcommand{\wG}{{}^{(\varphi)}\!G}

\newcommand{\Fseven} {\fontsize{7}{11}\selectfont  }

\newcommand{\Ften} {\fontsize{10}{11}\selectfont  }

\usepackage{fancyhdr}
\title{Transversality and Geometric Regularisation\\
in Distributional Statistical Models}

\author{Rodrigo Labouriau\\[4pt]
\small Department of Mathematics, Aarhus University\\
\small \texttt{rodrigo.labouriau@math.au.dk} and
\small \texttt{rodrigo.labouriau@rlstatlab.com}}

\date{Spring 2026}

\begin{document}
\maketitle

\begin{abstract}
The distributional statistical framework represents a probability law by a
tempered distribution $T$---on the same footing as a density or a
characteristic function---and extracts information from it through a positive,
rapidly decaying kernel $\varphi$ that serves as a measurement instrument
rather than as part of the law.  This paper develops the thesis that the kernel acts
as a \emph{geometric regulariser}, placing the statistical model in a
generic (transversal) position relative to degeneracy loci that encode
non-identifiability, singular information, moment indeterminacy, and
representation failure.

Using the classical transversality theorems of Whitney, Thom, and
Mather, and their infinite-dimensional extensions due to Smale and
Abraham, we prove a finite-dimensional weak transversality theorem
showing that, for a generic kernel in any sufficiently rich family,
the kernel-induced feature map avoids degeneracy strata of
sufficiently high codimension.  The core results are developed for
parametric models with finite-dimensional parameter spaces; for the
infinite-dimensional case we give a \emph{conditional} transversality
theorem that, under a Fredholm hypothesis on the feature map, reduces the
genericity question to the same finite-rank mechanism, and we verify its
hypotheses in closed form on a non-dominated Gaussian sequence model.  We then establish concrete,
verifiable conditions---formulated as rank conditions on the
Jacobian of the joint feature map---under which the transversality
hypothesis can be checked for specific model classes, and verify
these conditions for location families, the log-normal, Stein
discrepancies, and graphical models.  We extend the degeneracy
classification to include representation degeneracy (Type~0) for
models without closed-form densities---including genuinely non-dominated
families, which possess no likelihood---and identify higher-order
instabilities (Type~IV) arising in non-chordal graphical models.
We show that these phenomena---including identifiability, robustness,
moment determinacy, Fisher information regularity, Stein discrepancy,
inferential separation, and the Behrens--Fisher problem---admit a
unified geometric interpretation as transversality conditions on the
feature map.

This paper serves as a geometric companion to a series of papers
developing the distributional statistical
framework, providing the transversality-theoretic
foundation that unifies their separate contributions.

\medskip
\noindent
\textbf{Keywords:} distributional statistical models, transversality,
kernel regularisation, identifiability, moment indeterminacy,
information geometry, inferential separation, Behrens--Fisher problem.

\medskip
\noindent
\textbf{MSC 2020:} 62B05, 62F35, 57R45, 46F05, 53B12.
\end{abstract}

\newpage
\setstretch{0.9}
\Fseven
\tableofcontents
\newpage
\normalsize
\setstretch{1.1}


\section{Introduction}
\label{sec:intro}
Many statistical models of practical and theoretical importance fall outside the
scope of classical moment-based methodology. Heavy-tailed distributions may lack
finite moments, while others, such as the log-normal, are moment-indeterminate.
In such settings, fundamental concepts—identifiability, Fisher information, and
likelihood-based inference—become ill-defined or unstable.

A recent line of work represents a probability law by a tempered
distribution $T$---a characterisation on the same footing as a density or a
characteristic function---observed through a positive, rapidly decaying
kernel $\varphi$~\cite{A}: the kernel is the instrument through which the law
is probed, not part of the law. Expectations and moments are defined
via the pairing $\langle T, g\varphi\rangle$, yielding well-defined weak moments
and weak characteristic functions of all orders.

Empirically, the introduction of a kernel resolves several difficulties
simultaneously: it restores identifiability in moment-indeterminate models,
regularises information quantities, and induces robustness through bounded
influence functions. The purpose of this paper is to provide a geometric
explanation of these phenomena.

Our main thesis is that the kernel---the instrument through which the law is
observed---acts as a generic perturbation in the sense of transversality
theory: the freedom to choose the instrument is precisely the genericity that
dissolves the degeneracies. Given a parametric model $\{T_\theta: \theta\in\Theta\}$ and a kernel
$\varphi$, the weak framework induces a feature map
\[
\Phi_\varphi : \Theta \to \mathcal F,
\]
where $\mathcal F$ is a feature space of weak moments or weak characteristic
functions. Statistical pathologies correspond to geometric degeneracies of this
map: non-identifiability to self-intersections, singular information to rank
deficiency, and moment indeterminacy to failure of separation.

We show that, under suitable regularity conditions, these degeneracies are
non-generic. In particular, we prove a finite-dimensional transversality result
(Theorem~\ref{thm:main}) showing that, for a generic choice
of kernel in a sufficiently rich family, the feature map is transversal to the
degeneracy strata, and hence avoids them when their codimension exceeds the
model dimension.  A key limitation of the present development is that it applies
to \emph{parametric} models, i.e.\ models with finite-dimensional parameter
spaces $\Theta \subset \R^p$; the extension to semiparametric and nonparametric
settings, where $\Theta$ is infinite-dimensional, requires different analytical
tools and is discussed in Section~\ref{sec:infinite_dim}.
We then move beyond the abstract genericity statement and
establish concrete, verifiable conditions---formulated as rank conditions on the
Jacobian of the joint map---under which the transversality hypothesis can be
checked directly from the model structure.  We verify these conditions for
location families, the log-normal, Stein discrepancies, and graphical models.

This perspective leads to a unified interpretation of several phenomena:
identifiability corresponds to injectivity of the feature map, information
regularity to immersivity, and robustness to boundedness of the induced metric.
We extend the degeneracy classification to include representation degeneracy,
arising in models without closed-form densities, and higher-order instabilities
arising in non-chordal graphical models.

The paper is organised as follows.
Section~\ref{sec:distributional} recalls the distributional framework.
Section~\ref{sec:transversality} reviews transversality theory.
Section~\ref{sec:finite_dim_thm} presents the finite-dimensional
transversality theorem.
Section~\ref{sec:transversality_condition_dim} establishes verifiable
conditions---expressed as rank conditions on the Jacobian of the joint
feature map---under which the transversality hypothesis holds, and
verifies them for location families, the log-normal, Stein
discrepancies, and graphical models.
Section~\ref{sec:infinite_dim} discusses infinite-dimensional
extensions.
Section~\ref{sec:degeneracies} formulates the degeneracy
stratification.
Section~\ref{sec:examples} provides concrete examples, including the
log-normal and the Cauchy.
Sections~\ref{sec:stein} and~\ref{sec:behrens_fisher} connect the
framework to Stein’s method and the Behrens--Fisher problem.
Section~\ref{sec:classification} classifies singularities by
codimension.
Section~\ref{sec:unification} develops the unifying perspective,
including the interpretation of inferential separation as
transversality to the nuisance-parameter fibration.

\section{The Distributional Framework}
\label{sec:distributional}

We briefly recall the essentials of the distributional statistical
framework; see~\cite{A,B} for the full development.

\subsection{Distribution--kernel pairs}

Let $\cS(\R^d)$ denote the Schwartz space of rapidly decaying smooth
functions and $\cS'(\R^d)$ its dual (tempered distributions).  A
\emph{distributional statistical model} is a parametric family
$\{T_\theta : \theta\in\Theta\}\subset\cS'(\R^d)$, where
$\Theta\subset\R^p$ is open.  A \emph{kernel} is a function
$\varphi\in\cS(\R^d)$ with $\varphi>0$.

Here $T_\theta$ represents the probability law---a characterisation on the
same footing as a density, a distribution function, or a characteristic
function---and the kernel $\varphi$ is the instrument through which it is
observed, not part of the law.
Expectations are defined through distributional pairings:
${}^{(\varphi)}\E_\theta[g] = \langle T_\theta, g\varphi\rangle$.

\begin{remark}[Singularity as differentiated regularity]
\label{rem:structure_theorem}
The use of tempered distributions should not be interpreted as
introducing arbitrarily pathological objects.  By the classical
structure theorem (see, e.g., Strichartz~\cite{Strichartz2003},
Section~6.3), every tempered distribution can be represented as a
finite sum of derivatives of continuous functions with at most
polynomial growth.  Singular probabilistic behaviour---point masses,
jumps, heavy tails---arises when ordinary functions are differentiated
in the weak sense.  The kernel~$\varphi$ acts as a regularising
observational device that converts these differentiated structures into
stable scalar quantities.  From this perspective, geometric
degeneracies (Section~\ref{sec:degeneracies}) correspond to unstable
configurations of differentiated regularity, while kernel perturbations
provide a mechanism for regularising them.
\end{remark}

\begin{remark}[Continuity and perturbation of weak features]
\label{rem:continuity_perturbation}
Tempered distributions act \emph{continuously} on Schwartz space: if
$\varphi_j\to\varphi$ in $\cS(\R^d)$, then
$\langle T,\varphi_j\rangle\to\langle T,\varphi\rangle$ for every
$T\in\cS'(\R^d)$.  Consequently the weak moments, the weak characteristic
function, and the feature map $\Phi_\varphi$ depend continuously---indeed
smoothly, for kernels varying in a smooth family---on the instrument.  This
is the analytic foundation of the transversality programme: a generic
perturbation of the instrument produces a controlled perturbation of the
feature map, so the genericity arguments of
Sections~\ref{sec:finite_dim_thm}--\ref{sec:degeneracies} act on stable,
continuously varying objects rather than on the (possibly singular) classical
density.
\end{remark}

\subsection{Weak moments and characteristic functions}

The \emph{weak moment of order~$j$} is
\[
  \wm{j}(\theta)
  \;:=\;
  \langle T_\theta,\, x^j\,\varphi(x)\rangle
  \;=\;
  \E_\theta\bigl[X^j\,\varphi(X)\bigr],
\]
where the parenthesised left superscript $(\varphi)$ records the
dependence on the kernel, following the notational convention of the
series.  The \emph{weak characteristic function} is
${}^{(\varphi)}\phi_\theta(u) = \langle T_\theta, e^{iux}\varphi(x)
\rangle$.

All weak moments and the weak characteristic function are
well-defined for all orders and all~$u$, because the kernel provides
the necessary integrability~\cite[Proposition~2.17]{A}.

\subsection{Weak cumulants}

The kernel defines a tilted probability
$p_\varphi(x;\theta) = f(x;\theta)\varphi(x)/\wm{0}(\theta)$.
The \emph{weak cumulants} $\wk{j}$ are the cumulants of $X$ under
this tilted distribution.  The weak cumulant generating function
${}^\varphi\!K(t) = \log\E_\theta[e^{tX}\varphi(X)] - \log\wm{0}$
is entire (converges for all $t\in\R$), even when the classical
cumulant generating function does not exist~\cite{D}.

\subsection{Distributional metric}

The distributional information metric~\cite{C,E} is the $p\times p$
tensor
\[
  \wG_{ab}^{(J)}(\theta)
  \;=\;
  \sum_{j=0}^{J}
  \frac{\partial \wm{j}}{\partial\theta_a}\,
  \frac{\partial \wm{j}}{\partial\theta_b}.
\]
This defines a Riemannian metric on $\Theta$.  We will see that this
tensor is the first fundamental form of the feature-map immersion
(Section~\ref{sec:info_geom}).

\section{Transversality Theory}
\label{sec:transversality}

\subsection{Transversality}

Let $M$ and $N$ be smooth manifolds and $S\subset N$ a submanifold
of codimension~$c$.

\begin{definition}[Transversality]
\label{def:transversality}
A smooth map $f:M\to N$ is \emph{transversal} to $S$, written
$f\pitchfork S$, if for every $x\in f^{-1}(S)$,
$Df_x(T_x M) + T_{f(x)} S = T_{f(x)} N$.
\end{definition}

\begin{proposition}
\label{prop:consequences}
\begin{enumerate}[label=\textnormal{(\alph*)}]
\item If $f\pitchfork S$, then $f^{-1}(S)$ is a smooth submanifold
  of $M$ of codimension~$c$ (or empty).
\item If $c > \dim M$, then $f\pitchfork S$ iff $f(M)\cap S =
  \emptyset$.
\end{enumerate}
\end{proposition}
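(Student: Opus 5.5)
For part (a), the plan is to work locally near each point $x\in f^{-1}(S)$. Since $S\subset N$ is an embedded submanifold of codimension~$c$, I would first choose a chart of $N$ around $y=f(x)$ in which $S$ is cut out by $c$ smooth functions. Concretely, this gives a submersion $g:V\to\R^c$ on a neighbourhood $V$ of $y$ with $S\cap V=g^{-1}(0)$ and $\ker Dg_y=T_yS$.

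Near $x$ one then has $f^{-1}(S)=(g\circ f)^{-1}(0)$ on $f^{-1}(V)$, which is open because $f$ is continuous. The key step is to show that $0$ is a regular value of $g\circ f$ at every point of $f^{-1}(S)\cap f^{-1}(V)$. For this I would apply $Dg_y$ to the transversality identity $Df_x(T_xM)+T_yS=T_yN$. Since $Dg_y$ kills $T_yS$ and is onto $\R^c$, I get
\[
D(g\circ f)_x(T_xM)=Dg_y\bigl(Df_x(T_xM)\bigr)=Dg_y(T_yN)=\R^c .
\]
The regular value (preimage) theorem, which is a consequence of the implicit function theorem, then shows that $(g\circ f)^{-1}(0)$ is a smooth submanifold of $f^{-1}(V)$ of codimension~$c$. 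Being a submanifold is a local property, so covering $f^{-1}(S)$ by such neighbourhoods gives the global statement, or shows that $f^{-1}(S)$ is empty.

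For part (b), the implication from disjointness to transversality is immediate: if $f(M)\cap S=\emptyset$, then the condition in Definition~\ref{def:transversality} quantifies over the empty set, so it holds vacuously. For the converse I see two routes.
\begin{itemize}
\item Apply part (a): $f^{-1}(S)$ would be a submanifold of codimension $c>\dim M$, i.e.\ of negative dimension, so it must be empty.
\item Count dimensions directly: at any $x\in f^{-1}(S)$ one would have
\[
\dim\bigl(Df_x(T_xM)+T_{f(x)}S\bigr)\le \dim M+(\dim N-c)<\dim N,
\]
which contradicts the transversality identity.
\end{itemize}
I will use the dimension count, since it avoids any worry about interpreting negative-dimensional manifolds.

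The only real obstacle is the careful statement of the local defining submersion for $S$. This requires $S$ to be embedded, or at least to be handled locally, and I will state that assumption explicitly. I also need to make sure the regular value theorem is applied on the open set $f^{-1}(V)$ rather than on all of $M$. Everything else is linear algebra.
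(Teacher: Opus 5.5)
The paper states this proposition without proof, as standard background from differential topology (cf.\ \cite{GuilleminPollack1974,Hirsch1976}). Your argument is the standard textbook proof and is correct: a local defining submersion $g$ for $S$ and the regular value theorem for $g\circ f$ on $f^{-1}(V)$ give (a), and the count $\dim M+(\dim N-c)<\dim N$ gives (b). One detail deserves to be stated explicitly. Since $g$ vanishes on $S\cap V$ and is a submersion on $V$, $Dg_{y'}$ annihilates $T_{y'}S$ and maps onto $\R^c$ at \emph{every} $y'\in S\cap V$, not only at $y=f(x)$. Your displayed computation therefore applies verbatim at each point of $f^{-1}(S\cap V)$, which is what shows that $0$ is a regular value of $g\circ f$.
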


\subsection{Thom's theorem and the parametric version}

\begin{theorem}[Thom {\cite{Thom1954}}]
\label{thm:thom}
The set $\{f\in C^\infty(M,N) : f\pitchfork S\}$ is residual (and
open dense if $S$ is closed) in the Whitney $C^\infty$ topology.
\end{theorem}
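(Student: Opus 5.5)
The plan is the standard route to Thom's transversality theorem: reduce to a local statement in jet space, then combine the parametric transversality theorem with Sard's theorem.

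\emph{Step 1: the parametric transversality lemma.} Let $F: M\times B\to N$ be smooth, with $B$ a finite-dimensional parameter manifold, and suppose $F\pitchfork S$. Then $f_b=F(\cdot,b)\pitchfork S$ for almost every $b\in B$. To prove this, note that $W=F^{-1}(S)$ is a submanifold of $M\times B$ of codimension $c$ by Proposition~\ref{prop:consequences}(a). Let $\pi:W\to B$ be the projection. A linear-algebra check shows that $b$ is a regular value of $\pi$ exactly when $f_b\pitchfork S$: take a vector in $T_{f_b(x)}N$, write it as $DF(v,w)+s$ with $s\in T S$, and subtract $DF(v',w)$ for a lift $(v',w)\in T_{(x,b)}W$. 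Sard's theorem then says the critical values of $\pi$ have measure zero, which gives the claim.

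\emph{Step 2: localisation and density.} Cover $M$ by countably many relatively compact coordinate balls $U_i$ with $\overline U_i\subset V_i$, where $f(V_i)$ lies in a chart of $N$. Cover $S$ by countably many compact pieces $K_j$, each lying in a chart in which $S$ is a coordinate plane. Define
\[
\mathcal T_{ij}=\{f : f\pitchfork S \text{ at every } x\in\overline U_i \text{ with } f(x)\in K_j\}.
\]
\textbf{Density.} Given $f$, use a bump function $\rho$ supported in $V_i$ and equal to $1$ on $\overline U_i$. Form the family $F(x,b)=f(x)+\rho(x)\,b$ for $b\in\R^{\dim N}$ small, expressed in the chart. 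On $U_i$ the map $F$ is a submersion in $b$, so $F\pitchfork S$ there. Step 1 then gives arbitrarily small $b$ with $f_b\in\mathcal T_{ij}$. Because $\rho$ has compact support, $f_b\to f$ in the Whitney $C^\infty$ topology as $b\to 0$.

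\emph{Step 3: openness and the Baire argument.} Transversality at points of the compact set $\overline U_i\cap f^{-1}(K_j)$ is an open condition on the $1$-jet, since it is a surjectivity (maximal rank) condition. Moreover, points near $\overline U_i$ that map near $K_j$ are controlled by $C^1$ closeness on a compact set. Together these show that $\mathcal T_{ij}$ is open in the Whitney $C^1$ topology, and hence in the Whitney $C^\infty$ topology. The transversal maps form $\bigcap_{i,j}\mathcal T_{ij}$, a countable intersection of open dense sets. Since $C^\infty(M,N)$ with the Whitney topology is a Baire space, this intersection is residual.

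\emph{Step 4: the closed case.} If $S$ is closed, the set of transversal maps is open by a global version of the openness argument. One uses a locally finite cover and the fact that Whitney neighbourhoods allow the control to vary from point to point. It is dense because it contains a residual set.

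\textbf{Main obstacle.} I expect the main difficulty to be the Baire property of $C^\infty(M,N)$ in the Whitney topology, which is not metrizable when $M$ is noncompact. I would handle it by citing the standard result (Golubitsky--Guillemin, or Hirsch) rather than reproving it. The openness step for noncompact $S$ also needs care, and this is why the localisation to compact pieces $K_j$ is used.
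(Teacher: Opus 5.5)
The paper does not prove this theorem; it quotes it from Thom and the standard texts. Your overall structure (parametric transversality via Sard, localisation to compact pieces, openness from the $1$-jet, Baire) is the textbook one. There is, however, a genuine gap in Step~2.

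Your cover $\{U_i\subset V_i\}$, and with it the sets $\mathcal T_{ij}$, is built from a particular map $f$, since you require $f(V_i)$ to lie in a chart of $N$. The Baire argument of Step~3 needs each $\mathcal T_{ij}$ to be one fixed set that is dense in all of $C^\infty(M,N)$. So you must be able to push an \emph{arbitrary} map $g$ into $\mathcal T_{ij}$. But $g(V_i)$ need not lie in any chart (for $N=S^1$ a map can wrap $V_i$ around the whole circle), and then $g+\rho b$ is meaningless. As written, you have only shown that $f$ lies in the closure of sets constructed from $f$ itself, which is not density.

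The repair is to make the localisation independent of the map and to take the chart from the target side.
\begin{enumerate}
\item[(1)] Fix compact sets $L_i$ covering $M$ and compact sets $K_j$ covering $S$, each $K_j$ inside a submanifold chart $W_j$ of $N$. Put $\mathcal T_{ij}=\{g: g\pitchfork S \text{ at every } x\in L_i \text{ with } g(x)\in K_j\}$.
\item[(2)] Given any $g$, the set $C=L_i\cap g^{-1}(K_j)$ is compact and lies in the open set $g^{-1}(W_j)$. Choose an open $O\supset C$ with compact closure in $g^{-1}(W_j)$, and a bump $\rho$ with compact support in $g^{-1}(W_j)$ and $\rho=1$ on $\overline O$.
\item[(3)] For $|b|$ small, set $g_b=g+\rho b$ in the coordinates of $W_j$, and $g_b=g$ elsewhere. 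On $O$ this family is a submersion in $b$, so Step~1 gives $g_b\pitchfork S$ on $O$ for almost every small $b$.
\item[(4)] The missing ingredient is a compactness argument showing the perturbation creates no new relevant points outside $O$. For $|b|$ small, every $x\in L_i$ with $g_b(x)\in K_j$ lies in $O$. Otherwise there are $x_n\in L_i\setminus O$ and $b_n\to0$ with $g_{b_n}(x_n)\in K_j$; these accumulate at some $x\in L_i\setminus O$ with $g(x)\in K_j$, i.e.\ $x\in C\setminus O$, contradicting $C\subset O$.
\end{enumerate}
Hence $g_b\in\mathcal T_{ij}$ and $g_b\to g$ in the Whitney topology, which gives density. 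This is how Golubitsky--Guillemin localise in their jet-space proof.

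Two minor points. First, $\bigcap_{i,j}\mathcal T_{ij}$ is residual by definition; the Baire property is what makes it \emph{dense}, and Step~4 relies on that. Second, for closed $S$ openness is immediate: the set of $1$-jets that either miss $S$ or are transversal to it is open in $J^1(M,N)$, so the transversal maps form a basic Whitney $C^1$-open set.
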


\begin{theorem}[Parametric Transversality
  {\cite{GuilleminPollack1974,Hirsch1976}}]
\label{thm:parametric}
Let $F:M\times\Lambda\to N$ be smooth with $F\pitchfork S$.  Then
$F_\lambda\pitchfork S$ for a residual set of $\lambda\in\Lambda$.
\end{theorem}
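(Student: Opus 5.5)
The strategy is the standard one: reduce transversality of the individual maps $F_\lambda$ to a regular-value statement for a single auxiliary projection, and then apply Sard's theorem. Put
\[
W := F^{-1}(S)\subset M\times\Lambda .
\]
Since $F\pitchfork S$, Proposition~\ref{prop:consequences}(a) makes $W$ a smooth submanifold of $M\times\Lambda$ of codimension $c$, or empty. Let $\pi:W\to\Lambda$ be the restriction of the projection $(x,\lambda)\mapsto\lambda$. The argument has two steps: first show that $F_\lambda\pitchfork S$ holds exactly when $\lambda$ is a regular value of $\pi$, then apply Sard's theorem to $\pi$.

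\textbf{The linear-algebra step.} Fix $(x,\lambda)\in W$ and write $y=F(x,\lambda)$. Since $S$ is a submanifold of codimension $c$, near $y$ we can write $S$ locally as $g^{-1}(0)$ for a submersion $g:N\to\R^c$.

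Then $T_{(x,\lambda)}W=\ker D(g\circ F)_{(x,\lambda)}$. The condition $F\pitchfork S$ says that $D(g\circ F)_{(x,\lambda)}$ is onto $\R^c$. Likewise, $F_\lambda\pitchfork S$ at $x$ says that $D(g\circ F_\lambda)_x = D(g\circ F)_{(x,\lambda)}|_{T_xM\times\{0\}}$ is onto $\R^c$.

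So it suffices to prove the following for a surjective linear map $A:V\oplus U\to\R^c$, where $V=T_xM$ and $U=T_\lambda\Lambda$:
\[
A|_{V}\ \text{is onto}
\quad\Longleftrightarrow\quad
\text{the projection } \ker A\to U \text{ is onto.}
\]

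For the forward direction, take $u\in U$. Surjectivity of $A|_V$ gives $v\in V$ with $Av=Au$. Then $(-v,u)\in\ker A$ projects to $u$.

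For the reverse direction, take $z\in\R^c$ and write $z=A(v,u)$. By hypothesis there is $v'\in V$ with $(v',u)\in\ker A$. Then
\[
A(v-v',0)=A(v,u)-A(v',u)=z ,
\]
so $z$ lies in the image of $A|_V$.

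At points $x\notin F_\lambda^{-1}(S)$ the transversality condition is vacuous. Hence $F_\lambda\pitchfork S$ if and only if $\lambda$ is a regular value of $\pi$.

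\textbf{Applying Sard.} Sard's theorem for the smooth map $\pi$ between second-countable manifolds says that the critical values of $\pi$ form a set of measure zero. Cover $W$ by countably many compact sets $K_i$ on which we work in charts. The critical points of $\pi$ form a closed set, so their intersection with each $K_i$ is compact. Its image under $\pi$ is therefore compact and, by Sard, has measure zero. A closed measure-zero set has empty interior, so it is nowhere dense.

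The set of regular values is thus a countable intersection of open dense sets, which is residual by definition. In the Banach or Fréchet parameter settings of Abraham and Smale, one replaces Sard with the Sard--Smale theorem. This requires $\pi$ to be Fredholm, which follows from the finite codimension of $S$ and the finite dimension of $M$. This is the version the paper will later need for infinite-dimensional kernel families.

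\textbf{Where the difficulty lies.} The main obstacle is the smoothness needed for Sard. In finite dimensions this is harmless because $F$ is $C^\infty$. In the infinite-dimensional case one needs $C^k$ with $k>\max(0,\operatorname{index}\pi)=\max(0,\dim M-c)$, and this has to be checked.
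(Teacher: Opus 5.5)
Your proof is correct and follows the standard argument in the references the paper cites; the paper itself gives no proof. You pass to $W=F^{-1}(S)$, use the linear-algebra lemma to show that $\lambda$ is a regular value of $\pi:W\to\Lambda$ exactly when $F_\lambda\pitchfork S$, and conclude with Sard, where your compact-exhaustion step correctly turns ``measure-zero critical values'' into a residual set.

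The only slip is in your closing aside. Sard--Smale requires separable \emph{Banach} manifolds, not Fr\'echet ones, and the paper stresses this for $\cS(\R^d)$. Also, the paper's later infinite-dimensional result (Theorem~\ref{thm:infinite}) keeps $\Lambda$ finite-dimensional and puts the infinite dimension in $\Theta$. That is why it must assume the Fredholm property (H2) rather than deduce it from $\dim M<\infty$ as you do.
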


\subsection{Jet spaces}

\begin{definition}[{\cite{Ehresmann1951}}]
The $r$-jet of $f:M\to N$ at $x$ is
$j^r f(x) = (x, f(x), Df(x), \ldots, D^rf(x))
\in J^r(M,N)$.
\end{definition}

\begin{theorem}[Multijet Transversality
  {\cite{Mather1970,GolubitskyGuillemin1973}}]
\label{thm:multijet}
For any submanifold $W\subset J^r(M,N)$, the set of $f$ with
$j^rf\pitchfork W$ is residual.
\end{theorem}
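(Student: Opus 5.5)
The plan is to deduce the multijet statement from the parametric transversality theorem (Theorem~\ref{thm:parametric}), applied to the jet map instead of to $f$. First I reduce to a local problem. Cover $M$ by countably many relatively compact coordinate charts $U_i$ and cover $N$ by charts. Write $W$ as a countable union of compact pieces $W_k$, each lying over a single chart pair. A countable intersection of residual sets is residual, and the Whitney $C^\infty$ topology on $C^\infty(M,N)$ is Baire. So it suffices to show that, for each such $U_i$ and $W_k$, the set
\[
\mathcal{T}_{i,k}=\{f : j^r f \pitchfork W \text{ at all points of } \overline{U_i} \text{ mapped into } W_k\}
\]
is open and dense. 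Openness follows from compactness of $\overline{U_i}$ and $W_k$: transversality on a compact set is an open condition, and $f\mapsto j^rf$ is continuous from the Whitney $C^{\infty}$ topology to the $C^0$ topology on jets. The content therefore lies in density.

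For density, fix $f$ and a bump function $\rho$ equal to $1$ on $\overline{U_i}$ and supported in a slightly larger chart. I work in local coordinates on the image chart, where $N$ looks like $\R^n$. Let $\Lambda$ be the finite-dimensional vector space of $\R^n$-valued polynomials of degree $\le r$ in the local coordinates of $M$, and define
\[
F(x,\lambda)=f(x)+\rho(x)\lambda(x).
\]
For $\lambda$ small, $F(\cdot,\lambda)$ stays in the image chart near $\overline{U_i}$. Consider the map $\Psi(x,\lambda)=j^rF_\lambda(x)$ into $J^r(M,N)$. The key claim is that $\Psi$ is a submersion on $U_i\times\Lambda_0$ for a small neighbourhood $\Lambda_0$ of $0$. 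This holds because, at fixed $x$, $\lambda\mapsto(\lambda(x),D\lambda(x),\dots,D^r\lambda(x))$ is a linear surjection onto the jet fibre. Indeed, Taylor polynomials centred at $x$ realise every $r$-jet, and $\rho\equiv1$ near $U_i$. Adding the $x$-directions covers the base. A submersion is transversal to every submanifold, so $\Psi\pitchfork W$. Theorem~\ref{thm:parametric} then gives a residual, hence dense, set of $\lambda\in\Lambda_0$ with $j^rF_\lambda\pitchfork W$ on $U_i$. Choosing $\lambda\to0$ yields $F_\lambda\to f$ in the Whitney topology, since $\rho$ has compact support. This proves density.

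The main obstacle is making the local-to-global patching rigorous. Two points need care: perturbations on one chart must not destroy transversality already achieved on another, and the Whitney topology must be Baire. My plan is to avoid sequential patching by using the open-dense-sets formulation above, so no correction is ever needed and Baire does the work. For the Baire property I would cite the standard result (Golubitsky--Guillemin, Ch.~II) rather than reprove it. If $W$ is not closed, openness may fail on non-compact pieces, which is exactly why I decompose $W$ into compact $W_k$ and obtain only residuality rather than open density.

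For the genuinely multijet version, replace $M$ by $M^{(s)}$, the space of $s$-tuples of distinct points, and use independent bump perturbations near each point. Distinctness lets the supports be chosen disjoint, so the same surjectivity argument goes through.
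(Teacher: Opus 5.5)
The paper gives no proof of this statement; it quotes it from Mather and Golubitsky--Guillemin. Your outline is the standard proof found there (Golubitsky--Guillemin, Ch.~II, \S4): countably many open dense sets indexed by compact pieces of $W$, polynomial perturbations of degree $\le r$ cut off by a bump function, parametric transversality for the submersion $(x,\lambda)\mapsto j^rF_\lambda(x)$, and then the Baire property. The statement as printed concerns $W\subset J^r(M,N)$, i.e.\ single-jet (Thom) transversality, so your main argument is what is needed. Your last paragraph sketches the usual multijet extension.

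One step fails as written, namely the density step. You define $F(x,\lambda)=f(x)+\rho(x)\lambda(x)$ ``in local coordinates on the image chart''. But $f$ is arbitrary, so $f(\operatorname{supp}\rho)$, and even $f(\overline{U_i})$, need not lie in the chart $V$ of $N$ over which $W_k$ sits. For example, take $M=\R$, $N=S^1$, and an $f$ that winds $\overline{U_i}$ several times around the circle. Then $f+\rho\lambda$ is undefined, and the claim that $F_\lambda$ ``stays in the image chart near $\overline{U_i}$'' is false. The covers were fixed before $f$, so you cannot shrink $U_i$ to fit $f$.

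The repair is a localisation. Only points $x\in\overline{U_i}$ with $f(x)\in\pi_N(W_k)$ matter, and $\pi_N(W_k)$ is a compact subset of $V$.
\begin{itemize}
\item Take $\rho\equiv1$ on an open neighbourhood $O$ of the compact set $A=\overline{U_i}\cap f^{-1}(\pi_N(W_k))$, with $\operatorname{supp}\rho$ compact inside $f^{-1}(V)$ intersected with the larger chart of $M$. Then $f+\rho\lambda$ is defined and stays in $V$ for $\lambda$ small.
\item Run the parametric argument on $O\times\Lambda_0$, where $\Psi$ is a submersion.
\item Argue by compactness that, for $\lambda$ small, every $x\in\overline{U_i}$ with $j^rF_\lambda(x)\in W_k$ lies in $O$. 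Indeed, $f(\overline{U_i}\setminus O)$ is compact and disjoint from $\pi_N(W_k)$, and $F_\lambda$ is uniformly close to $f$.
\end{itemize}
Alternatively, embed $N\subset\R^L$ with a tubular retraction $\pi$ and perturb by $\pi\circ(f+\rho\lambda)$; this needs no chart on $N$ at all.

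This localisation also fixes a mismatch in your write-up. $\mathcal T_{i,k}$ demands transversality on the closed set $\overline{U_i}$, but as written you only obtain it on the open set $U_i$, since $\rho\equiv1$ on $\overline{U_i}$ does not make $\rho$ locally constant at boundary points.

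Two smaller points. First, openness of $\mathcal T_{i,k}$ involves $D(j^rf)$, i.e.\ $j^{r+1}f$. So the continuity you need is into $C^1$ on jets (equivalently $j^{r+1}f$ into $C^0$), not $C^0$; the Whitney $C^\infty$ topology supplies this. Second, in the multijet case the same target-chart localisation must be done in each factor. The compact pieces of $W\subset J^r_s(M,N)$ should be chosen over products of charts with pairwise disjoint closures, which is what makes the supports of the $s$ bump functions disjoint.
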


\subsection{Whitney stratifications}

Degeneracy sets are typically stratified.  A \emph{Whitney
stratification}~\cite{Whitney1965} is a partition into smooth strata
satisfying Whitney's conditions~(a) and~(b).  Thom's theorem
extends to stratified targets.

\subsection{The Thom--Mather classification}

The Thom--Mather theory~\cite{Mather1970,Mather1971,
GolubitskyGuillemin1973} classifies the generic singularities of
smooth maps.  For $f:\R^m\to\R^n$, the generic singularities form a
hierarchy (folds, cusps, swallowtails, \ldots) indexed by codimension,
exhaustive for ``nice dimensions.''

\begin{remark}[Continuity and geometric stability]
\label{rem:continuity_perturbation}
The analytical basis of the kernel perturbation mechanism is the
continuity of tempered distributions on Schwartz space: if
$\varphi_n \to \varphi$ in $\cS(\R^d)$ and $T \in \cS'(\R^d)$, then
$\langle T, \varphi_n \rangle \to \langle T, \varphi \rangle$.  Hence
perturbing the kernel induces a controlled perturbation of the weak
features used to define the feature map.  Transversality may then be
understood as a condition ensuring that such perturbations remove
tangencies or degeneracies in a stable way, rather than merely
relabelling the coordinates in which the degeneracy is expressed.
This connects the analytical regularisation supplied by the kernel with
the geometric regularisation supplied by transversality: the kernel
smooths the distributional object at a chosen observational scale,
while transversality ensures that the resulting feature map meets the
relevant degeneracy strata non-tangentially.
\end{remark}

\section{A Finite-Dimensional Weak Transversality Theorem}
\label{sec:finite_dim_thm}

We now state and prove the main result, which captures the core
geometric mechanism in a clean finite-dimensional setting.

\begin{theorem}[Finite-dimensional weak transversality]
\label{thm:main}
Let $\Theta \subset \R^p$ and $\Lambda \subset \R^q$ be open sets,
and let $\{T_\theta:\theta\in\Theta\}\subset \cS'(\R)$ be a $C^r$
parametric distributional model.  Let
$\lambda \mapsto \varphi_\lambda \in \cS(\R)$ be a $C^r$
finite-dimensional family of positive Schwartz kernels.

Fix moment orders $0\leq j_0<\cdots<j_K$, and define the finite
weak moment feature map
\[
  \Phi_\lambda:\Theta\to \R^{K+1},
  \qquad
  \Phi_\lambda(\theta)
  =
  \bigl(
  {}^{(\varphi_\lambda)}\!m_{j_k}(\theta)
  \bigr)_{k=0}^{K}.
\]
Assume that the joint map
$F(\theta,\lambda)=\Phi_\lambda(\theta)$ is $C^r$ and transversal to
a smooth submanifold $D\subset \R^{K+1}$.

Then, for a residual subset $\Lambda_D\subset\Lambda$, the
restricted feature map $\Phi_\lambda$ is transversal to $D$ for
every $\lambda\in\Lambda_D$.

Consequently, for generic $\lambda$, the degeneracy set
$\Phi_\lambda^{-1}(D)$ is either empty or a smooth submanifold of
$\Theta$ of codimension $\mathrm{codim}(D)$.  If
$\mathrm{codim}(D) > p$, then
$\Phi_\lambda(\Theta)\cap D = \varnothing$ for generic~$\lambda$.
\end{theorem}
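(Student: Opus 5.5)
The plan is to reduce the statement to the parametric transversality theorem (Theorem~\ref{thm:parametric}) applied with $M=\Theta$, $N=\R^{K+1}$, $S=D$ and parameter manifold $\Lambda$. After that reduction, the consequences follow directly from Proposition~\ref{prop:consequences}. The substantive work lies in checking that the hypotheses of Theorem~\ref{thm:parametric} hold in the present finite-regularity setting.

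\textbf{Step 1: regularity of the joint map.} We record that $F(\theta,\lambda)=\langle T_\theta, x^{j_k}\varphi_\lambda\rangle$ is $C^r$. This is assumed in the statement, but it is also justified by the continuity of the pairing $\cS'\times\cS\to\R$ (Remark~\ref{rem:continuity_perturbation}): multiplication by $x^{j}$ is continuous on $\cS(\R)$, and the pairing is bilinear and separately continuous, hence jointly continuous on Fréchet spaces.

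\textbf{Step 2: the core argument (Sard).} Let $W=F^{-1}(D)$. Since $F\pitchfork D$, $W$ is a $C^r$ submanifold of $\Theta\times\Lambda$ of codimension $c=\mathrm{codim}(D)$, by Proposition~\ref{prop:consequences}(a) applied to $F$. Let $\pi:W\to\Lambda$ be the restriction of the projection. The standard linear-algebra lemma (as in Guillemin--Pollack) says that $\lambda$ is a regular value of $\pi$ if and only if $\Phi_\lambda\pitchfork D$. In outline: take $v\in T_{F(\theta,\lambda)}\R^{K+1}$ and write $v=DF(a,b)+w$ with $w\in TD$. If $\lambda$ is regular for $\pi$, choose $(a',b)\in T W$ with the same $b$. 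Then $v=D\Phi_\lambda(a-a')+\bigl(w+DF(a',b)\bigr)$, and the last term lies in $TD$, which gives the required surjectivity. Sard's theorem then shows that the set of critical values of $\pi$ has measure zero. To get a residual set rather than just full measure, exhaust $W$ by countably many compact pieces $W_n$. Each $\pi(\mathrm{Crit}\cap W_n)$ is compact and of measure zero, hence closed and nowhere dense, and the complement of their union is residual. Call this complement $\Lambda_D$.

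\textbf{Step 3: consequences.} For $\lambda\in\Lambda_D$, Proposition~\ref{prop:consequences}(a) shows that $\Phi_\lambda^{-1}(D)$ is empty or a submanifold of codimension $c$. If $c>p$, part~(b) shows that transversality forces $\Phi_\lambda(\Theta)\cap D=\varnothing$.

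\textbf{Main obstacle: regularity for Sard.} Sard's theorem for $\pi:W\to\Lambda$ requires $\pi\in C^k$ with $k>\max(0,\dim W-q)=\max(0,p-c)$. Since $\pi$ is $C^r$, this means the hypothesis should be read with $r\ge 1$ and $r>p-c$; otherwise one takes $r=\infty$. I would add this explicitly as a standing convention, noting that it is automatic whenever $c>p$ (only $r\ge1$ is needed) and whenever $r=\infty$.

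A related subtlety arises when $D$ is not closed: then $W$ need not be closed in $\Theta\times\Lambda$. This is why the proof uses a compact exhaustion of $W$ itself rather than of $\Theta\times\Lambda$, since $W$ is second countable as a submanifold of $\R^{p+q}$.
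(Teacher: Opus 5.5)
Your proposal is correct and follows the paper's route. The paper's proof simply applies the parametric transversality theorem (Theorem~\ref{thm:parametric}) to $F$ and $D$ and reads off the consequences from Proposition~\ref{prop:consequences}; you additionally unpack that theorem in the standard way, via Sard's theorem for the projection $F^{-1}(D)\to\Lambda$, with a compact exhaustion to obtain a residual set rather than merely a full-measure one.

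Your explicit requirement $r\ge 1$, $r>p-\mathrm{codim}(D)$ is a genuine improvement, because the paper invokes Theorem~\ref{thm:parametric}, which is stated for smooth maps, on a map assumed only $C^r$. Your Step~1 aside is not needed, and its appeal to joint continuity on Fréchet spaces is slightly off because $\cS'$ is not Fréchet, but nothing in the proof depends on it.
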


\begin{proof}
Apply the parametric transversality theorem
(Theorem~\ref{thm:parametric}) to $F:\Theta\times\Lambda\to\R^{K+1}$
and the submanifold $D$.  Since $F\pitchfork D$ by assumption, the
conclusion follows.  The codimension statement uses
Proposition~\ref{prop:consequences}(b).
\end{proof}

\begin{remark}
The assumption $F \pitchfork D$ can be verified in practice using the
differential criteria developed in
Section~\ref{sec:transversality_condition_dim}, where it is shown to
reduce to rank conditions on the parameter and kernel derivatives of
the joint feature map.  Concrete verifications for location families,
the log-normal, Stein discrepancies, and graphical models are given
there.
\end{remark}

\begin{corollary}[Generic full-rank weak information]
\label{cor:full_rank}
Under the hypotheses of Theorem~\ref{thm:main}, if $K+1\geq p$ and
the joint first-jet map $j^1F$ is transversal to the rank-degeneracy
stratum $\Sigma^1 = \{j^1f:\mathrm{rank}(Df)<p\}$, then for generic
$\lambda$ the weak information matrix
${}^{(\varphi_\lambda)}\!G(\theta) = D\Phi_\lambda(\theta)^\top
D\Phi_\lambda(\theta)$ is nonsingular except possibly on a
submanifold of positive codimension.  If $K+1 > 2p-1$, then
generically $\det {}^{(\varphi_\lambda)}\!G(\theta)>0$ for all
$\theta\in\Theta$.
\end{corollary}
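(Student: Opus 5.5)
The plan is to lift the argument of Theorem~\ref{thm:main} from the target space $\R^{K+1}$ to the first-jet space, in the style of Theorem~\ref{thm:multijet}. The jet bundle is $J^1(\Theta,\R^{K+1})\cong\Theta\times\R^{K+1}\times\R^{(K+1)\times p}$. Consider the joint jet map
\[
  \mathcal{J}:\Theta\times\Lambda\to J^1(\Theta,\R^{K+1}),\qquad
  \mathcal{J}(\theta,\lambda)=j^1\Phi_\lambda(\theta)
  =\bigl(\theta,\Phi_\lambda(\theta),D_\theta\Phi_\lambda(\theta)\bigr).
\]
This map is $C^{r-1}$, and we will need $r\geq 2$, or large enough for Sard's theorem to apply. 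The hypothesis ``$j^1F\pitchfork\Sigma^1$'' will be read as $\mathcal{J}\pitchfork\Sigma^1$. Applying Theorem~\ref{thm:parametric} to $\mathcal{J}$ with parameter space $\Lambda$ gives a residual set $\Lambda_\Sigma\subset\Lambda$ such that $j^1\Phi_\lambda\pitchfork\Sigma^1$ for every $\lambda\in\Lambda_\Sigma$. Intersecting with the residual set $\Lambda_D$ from Theorem~\ref{thm:main}, if desired, still leaves a residual set.

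The next step is to identify where the information matrix degenerates. Since ${}^{(\varphi_\lambda)}\!G=D\Phi_\lambda^\top D\Phi_\lambda$ is a Gram matrix, it satisfies $\det G(\theta)\geq0$. Equality holds iff $D\Phi_\lambda(\theta)$ has a nontrivial kernel, that is, iff $\mathrm{rank}\,D\Phi_\lambda(\theta)<p$, using $K+1\geq p$. So the singular set of $G$ is exactly $(j^1\Phi_\lambda)^{-1}(\Sigma^1)$.

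To apply Proposition~\ref{prop:consequences}, we stratify $\Sigma^1$ by the rank-deficiency strata $\Sigma^k=\{\mathrm{rank}=p-k\}$. These are smooth submanifolds of the matrix space, and the classical count gives
\[
  \mathrm{codim}\,\Sigma^k = k\,(K+1-p+k).
\]
The smallest codimension, attained at $k=1$, is $K+2-p\geq1$. By Proposition~\ref{prop:consequences}(a), applied stratum by stratum (Whitney stratified targets, Section~\ref{sec:transversality}), each preimage is a submanifold of $\Theta$ of codimension $k(K+1-p+k)\geq1$. This gives the first claim.

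For the second claim, assume $K+1>2p-1$, i.e.\ $K+2-p>p$. Then every stratum has codimension exceeding $\dim\Theta=p$. Proposition~\ref{prop:consequences}(b) therefore forces $j^1\Phi_\lambda(\Theta)\cap\Sigma^1=\varnothing$, so $D\Phi_\lambda$ has full rank everywhere and $\det G>0$ for all $\theta\in\Theta$.

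I expect the main obstacle to be that $\Sigma^1$ is not a submanifold but a stratified (determinantal) variety, while Theorem~\ref{thm:parametric} is stated for submanifolds. I would handle this by interpreting the hypothesis as transversality to each stratum $\Sigma^k$ and applying the parametric theorem to each of the finitely many strata. A countable intersection of residual sets is still residual, so the conclusion survives. A secondary point is the loss of one derivative in passing to jets, which requires $r$ to be large relative to $p$ and $q$ in the Sard step underlying Theorem~\ref{thm:parametric}.
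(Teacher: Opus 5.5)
Your argument is the one the paper has in mind. The corollary is stated without a separate proof; its only justification is the count $\mathrm{codim}\,\Sigma^1=K+2-p$ in Section~\ref{sec:classification}, combined with Theorem~\ref{thm:parametric} and Proposition~\ref{prop:consequences}. Your stratum-by-stratum reading of $\Sigma^1=\bigcup_{k\ge1}\{\mathrm{rank}=p-k\}$ and your attention to the derivative lost in passing to jets are refinements the paper omits. The first is genuinely needed, since transversality to the top stratum alone does not control the deeper ones.

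One point to tighten: for the first claim you actually obtain a closed finite union of submanifolds of codimension at least $K+2-p$, and that union need not be a single submanifold. For $p\ge4$ and $K+1$ close to $p$, transversal corank-$2$ points occur stably; for example, when $K+1=p=4$ they are isolated cone points of $\{\theta:\det D\Phi_\lambda(\theta)=0\}$. So ``submanifold'' in the statement should be read in this stratified sense.
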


\begin{remark}
Even the one-parameter Gaussian scale family
$\varphi_s(x) = (2\pi s^2)^{-1/2}e^{-x^2/(2s^2)}$, $s>0$
($q=1$), suffices to destroy many degeneracies, as the log-normal
example demonstrates.
\end{remark}

Theorem 4.1 provides a genericity result under the transversality condition
$F \pitchfork D$. In applications, it is therefore essential to identify
conditions under which this assumption can be verified. This is the purpose
of the next section.

\section{Verifiable Transversality Conditions}
\label{sec:transversality_condition_dim}

The finite-dimensional transversality theorem
(Theorem~\ref{thm:main}) provides a powerful genericity result, but it
assumes that the joint map
$F(\theta,\lambda) = \Phi_\lambda(\theta)$ is transversal to a given
degeneracy stratum~$D$.  In applications, it is desirable to replace
this assumption by conditions that can be verified directly from the
structure of the model.

In this section we formulate such conditions, expressing
transversality in terms of derivatives of the weak feature map, and
verify them for several important classes of models.

\subsection{A component-wise transversality criterion}

Let $F : \Theta \times \Lambda \to \R^{K+1}$ be the joint weak
feature map,
\[
  F(\theta,\lambda)
  \;=\;
  \bigl(
  {}^{(\varphi_\lambda)}m_{j_0}(\theta),\;
  \ldots,\;
  {}^{(\varphi_\lambda)}m_{j_K}(\theta)
  \bigr).
\]
Let $D \subset \R^{K+1}$ be a smooth submanifold of codimension~$c$
and let $y = F(\theta,\lambda) \in D$.  Write $N_y D$ for the normal
space to $D$ at~$y$, i.e.\ $N_y D = (T_y D)^\perp$ in~$\R^{K+1}$.

Because $\Theta\times\Lambda$ is a product, the derivative
$DF(\theta,\lambda)$ admits a natural decomposition into a
\emph{model component} and a \emph{kernel component}:
\begin{equation}\label{eq:DF_decomposition}
  DF(\theta,\lambda)\;=\;
  \bigl(\,D_\theta F,\;\; D_\lambda F\,\bigr),
\end{equation}
where $D_\theta F = D_\theta\Phi_\lambda(\theta)$ and $D_\lambda F$
collects the derivatives with respect to the kernel parameters.

\begin{lemma}[Component-wise transversality criterion]
\label{lem:component_transversality}
Let $\pi_N : \R^{K+1} \to N_y D$ denote the orthogonal projection
onto the normal space.  Then $F \pitchfork D$ at
$(\theta,\lambda)$ if and only if
\begin{equation}\label{eq:component_criterion}
  \pi_N\!\bigl(\operatorname{Im} D_\theta F\bigr)
  \;+\;
  \pi_N\!\bigl(\operatorname{Im} D_\lambda F\bigr)
  \;=\;
  N_y D.
\end{equation}
In particular, transversality can be verified by examining the two
components separately: the model derivatives $D_\theta F$ and
the kernel derivatives $D_\lambda F$ need not individually span the
normal space, provided that their normal projections together do.
\end{lemma}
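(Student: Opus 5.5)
The plan is to reduce the statement to elementary linear algebra, combining Definition~\ref{def:transversality} with the product decomposition \eqref{eq:DF_decomposition}. Transversality at $(\theta,\lambda)$ with $y=F(\theta,\lambda)\in D$ means
\[
\operatorname{Im} DF(\theta,\lambda) + T_yD = \R^{K+1}.
\]
Since $T_{(\theta,\lambda)}(\Theta\times\Lambda)=\R^p\oplus\R^q$ and $DF(v,w)=D_\theta F\,v + D_\lambda F\,w$, I would first record that
\[
\operatorname{Im} DF = \operatorname{Im} D_\theta F + \operatorname{Im} D_\lambda F.
\]
This holds because the image of a sum over a direct-sum domain is the sum of the images.

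Next I will use the orthogonal splitting $\R^{K+1}=T_yD\oplus N_yD$. Here $\pi_N$ is linear, surjective onto $N_yD$, and has kernel exactly $T_yD$. The key elementary fact is that, for any subspace $V\subset\R^{K+1}$,
\[
V + T_yD = \R^{K+1} \iff \pi_N(V)=N_yD.
\]
For ($\Rightarrow$), apply $\pi_N$ to both sides and use $\pi_N(T_yD)=0$ and $\pi_N(\R^{K+1})=N_yD$. For ($\Leftarrow$), take $z\in\R^{K+1}$. Write $\pi_N z=\pi_N v$ for some $v\in V$. Then $z-v\in\ker\pi_N=T_yD$, so $z\in V+T_yD$.

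Finally, I apply this with $V=\operatorname{Im} D_\theta F+\operatorname{Im} D_\lambda F$. By linearity,
\[
\pi_N(V)=\pi_N(\operatorname{Im} D_\theta F)+\pi_N(\operatorname{Im} D_\lambda F),
\]
which gives \eqref{eq:component_criterion}. The ``in particular'' clause then only needs a remark: the criterion concerns the sum of the two projected images. So neither summand needs to equal $N_yD$ on its own; for instance, each could span a complementary part of a normal space of dimension $c\ge 2$.

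There is no substantive obstacle; the only point needing care is the ($\Leftarrow$) direction, which rests on the identification $\ker\pi_N=T_yD$. That identification holds because $N_yD$ is defined as the orthogonal complement of $T_yD$ in the finite-dimensional space $\R^{K+1}$, so the sum is direct.
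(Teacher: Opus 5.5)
Your proof is correct and follows the same route as the paper: you rewrite transversality as $\operatorname{Im} DF + T_yD = \R^{K+1}$, apply $\pi_N$, and split $\operatorname{Im} DF$ into the model and kernel images. Your explicit argument for the converse, which lifts $\pi_N z$ to some $v \in V$ and uses $\ker \pi_N = T_yD$, is a useful addition, since the paper only asserts the equivalence after applying $\pi_N$.
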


\begin{proof}
By definition, $F \pitchfork D$ at $(\theta,\lambda)$ means
$\operatorname{Im} DF(\theta,\lambda) + T_y D = \R^{K+1}$.  Applying
$\pi_N$ and using $\pi_N(T_y D) = \{0\}$, this is equivalent to
$\pi_N(\operatorname{Im} DF) = N_y D$.  By the
decomposition~\eqref{eq:DF_decomposition},
$\operatorname{Im} DF = \operatorname{Im} D_\theta F +
\operatorname{Im} D_\lambda F$, so
$\pi_N(\operatorname{Im} DF)
= \pi_N(\operatorname{Im} D_\theta F) +
\pi_N(\operatorname{Im} D_\lambda F)$.
\end{proof}

\begin{remark}
The lemma has a clear geometric interpretation: the model derivative
captures the intrinsic geometry of the parametric family, while the
kernel derivative provides external directions.  Transversality
holds whenever these two sources of variation, projected onto the
directions normal to the degeneracy stratum, jointly span the normal
space.  The kernel thus acts as a supplement to the model's own
geometric richness.
\end{remark}

\subsection{Structure of the derivative}

For each weak moment
${}^{(\varphi_\lambda)}m_j(\theta) =
\langle T_\theta, x^j \varphi_\lambda(x) \rangle$,
the two components of the derivative are:
\begin{align}
  \frac{\partial}{\partial \theta_a}\,
  {}^{(\varphi_\lambda)}m_j(\theta)
  &\;=\;
  \left\langle
  \frac{\partial T_\theta}{\partial \theta_a},\;
  x^j \varphi_\lambda(x)
  \right\rangle,
  \label{eq:Dtheta}
  \\[4pt]
  \frac{\partial}{\partial \lambda_b}\,
  {}^{(\varphi_\lambda)}m_j(\theta)
  &\;=\;
  \left\langle
  T_\theta,\;
  x^j \frac{\partial \varphi_\lambda(x)}{\partial \lambda_b}
  \right\rangle.
  \label{eq:Dlambda}
\end{align}
The model component~\eqref{eq:Dtheta} involves derivatives of the
distributional model $T_\theta$ tested against the kernel-weighted
monomials, while the kernel component~\eqref{eq:Dlambda} involves
the original model tested against derivatives of the kernel.

\subsection{Submersivity and rank conditions}

The component-wise criterion of
Lemma~\ref{lem:component_transversality} reduces to a particularly
clean form when the full derivative is surjective.

\begin{theorem}[Submersivity implies universal transversality]
\label{thm:submersivity}
If, for every $(\theta,\lambda)\in\Theta\times\Lambda$, the Jacobian
$DF(\theta,\lambda)$ is surjective (i.e.\ has rank~$K+1$), then
$F\pitchfork D$ for \emph{every} smooth submanifold
$D\subset\R^{K+1}$.
\end{theorem}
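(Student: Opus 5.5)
The plan is to verify Definition~\ref{def:transversality} directly at each point of $F^{-1}(D)$, or equivalently to check the criterion of Lemma~\ref{lem:component_transversality}. No genericity argument is needed: the statement is purely pointwise and linear-algebraic. Fix an arbitrary smooth submanifold $D\subset\R^{K+1}$. If $F^{-1}(D)=\varnothing$, transversality holds vacuously. Otherwise, take any $(\theta,\lambda)$ with $y=F(\theta,\lambda)\in D$.

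Since $\Theta\times\Lambda$ is an open subset of $\R^{p+q}$, the tangent space at $(\theta,\lambda)$ is all of $\R^{p+q}$. Hence the image of $DF(\theta,\lambda)$ is $\operatorname{Im}(D_\theta F, D_\lambda F)$, using the decomposition~\eqref{eq:DF_decomposition}. By hypothesis this image has rank $K+1$, so it equals $\R^{K+1}$. Consequently
\[
\operatorname{Im} DF(\theta,\lambda) + T_y D \;\supseteq\; \operatorname{Im} DF(\theta,\lambda) \;=\; \R^{K+1} \;=\; T_y\R^{K+1},
\]
and the transversality condition holds at $(\theta,\lambda)$. Since $D$ and the point were arbitrary, $F\pitchfork D$ for every $D$.

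One can say the same thing in the language of Lemma~\ref{lem:component_transversality}. The orthogonal projection $\pi_N$ is surjective onto $N_yD$. It is also linear, so
\[
\pi_N(\operatorname{Im} D_\theta F)+\pi_N(\operatorname{Im} D_\lambda F)=\pi_N(\R^{K+1})=N_yD,
\]
which is exactly~\eqref{eq:component_criterion}.

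I expect no real obstacle. The only points needing a word of care are the following.
\begin{enumerate}[label=(\roman*)]
\item Surjectivity is assumed at \emph{every} point, so it holds in particular on $F^{-1}(D)$, whichever $D$ is chosen. This is what makes the conclusion universal in $D$.
\item Smoothness of $F$ (at least $C^1$, here $C^r$) is what makes $DF$ meaningful.
\item The conclusion includes the case $\operatorname{codim} D > p+q$. Submersivity then requires $K+1\le p+q$, so $F^{-1}(D)$ has the expected codimension by Proposition~\ref{prop:consequences}(a).
\end{enumerate}

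I would close with a remark that, combined with Theorem~\ref{thm:main}, this gives transversality of $\Phi_\lambda$ to every stratum for residual $\lambda$. A countable intersection of residual sets is still residual, so this holds simultaneously for any countable family of strata.
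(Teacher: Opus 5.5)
Your argument is correct and is essentially the paper's proof: surjectivity gives $\operatorname{Im} DF(\theta,\lambda)=\R^{K+1}$, so $\operatorname{Im} DF(\theta,\lambda)+T_yD=\R^{K+1}$ holds trivially at every point of $F^{-1}(D)$, and your reformulation through Lemma~\ref{lem:component_transversality} is an equivalent restatement. One small point: the case in your remark (iii) cannot occur. Whenever $DF$ is surjective, $\operatorname{codim} D\le K+1\le p+q$, so there is nothing to say about $\operatorname{codim} D>p+q$.
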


\begin{proof}
If $DF$ is surjective at $(\theta,\lambda)$, then
$\operatorname{Im} DF(\theta,\lambda) = \R^{K+1} \supset
T_y D + N_y D$, so the transversality condition is satisfied
trivially for any~$D$.
\end{proof}

\begin{remark}
Surjectivity of $DF$ is the strongest possible condition but
often the easiest to check, since it does not require knowledge of
the specific stratum~$D$.  It holds whenever $p + q \geq K+1$ and
the $(K+1)\times(p+q)$ Jacobian matrix has no rank deficiency.
In the statistical setting, this corresponds to the model and the
kernel family being jointly rich enough to generate all directions
in the feature space.
\end{remark}

When full surjectivity is not available, one can verify transversality
to a specific stratum by checking a rank condition on the normal
projection.

\begin{corollary}[Normal-rank condition]
\label{cor:normal_rank}
Let $D$ have codimension~$c$.  If the $(K+1)\times(p+q)$ Jacobian
$DF(\theta,\lambda)$, when composed with the projection $\pi_N$ onto
$N_y D$, has rank~$c$ at every point of $F^{-1}(D)$, then
$F\pitchfork D$.
\end{corollary}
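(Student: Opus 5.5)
The plan is to reduce the statement directly to Lemma~\ref{lem:component_transversality}, which says that $F\pitchfork D$ at $(\theta,\lambda)$ is equivalent to $\pi_N(\operatorname{Im} DF(\theta,\lambda)) = N_yD$. Transversality is a pointwise condition that only needs checking on $F^{-1}(D)$, because at points with $F(\theta,\lambda)\notin D$ Definition~\ref{def:transversality} imposes nothing. So we fix $(\theta,\lambda)\in F^{-1}(D)$ and set $y=F(\theta,\lambda)$.

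At such a point, consider the linear map
\[
\pi_N\circ DF(\theta,\lambda):\R^{p+q}\to N_yD .
\]
Its image is exactly $\pi_N(\operatorname{Im} DF(\theta,\lambda))$, and this is a linear subspace of $N_yD$. Since $D$ has codimension $c$ in $\R^{K+1}$, we have $\dim N_yD = c$. The hypothesis says this composed map has rank $c$, so its image is a $c$-dimensional subspace of the $c$-dimensional space $N_yD$, and therefore equals $N_yD$.

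Combining this with the decomposition \eqref{eq:DF_decomposition}, $\operatorname{Im}DF = \operatorname{Im}D_\theta F + \operatorname{Im}D_\lambda F$, the criterion \eqref{eq:component_criterion} holds. Lemma~\ref{lem:component_transversality} then gives $F\pitchfork D$ at $(\theta,\lambda)$. Since the point of $F^{-1}(D)$ was arbitrary, $F\pitchfork D$ globally.

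There is no real obstacle; the only care needed is bookkeeping. First, $\pi_N$ depends on $y$, and the rank condition is imposed only at points of $F^{-1}(D)$, where $N_yD$ is defined. Second, the linear-algebra identity $\pi_N(A+B)=\pi_N(A)+\pi_N(B)$ for subspaces is already used inside the lemma, so we can cite it rather than reprove it.
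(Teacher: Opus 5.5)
Your proof is correct and is the intended argument: the paper states this corollary without a separate proof, as an immediate consequence of Lemma~\ref{lem:component_transversality}, and your observation that a rank-$c$ image inside the $c$-dimensional space $N_yD$ must equal $N_yD$ is the one-line step needed. The detour through the decomposition $\operatorname{Im}DF=\operatorname{Im}D_\theta F+\operatorname{Im}D_\lambda F$ is unnecessary, since the lemma's proof already shows that $F\pitchfork D$ at $(\theta,\lambda)$ if and only if $\pi_N(\operatorname{Im}DF)=N_yD$, but it does no harm.
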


\subsection{Kernel-induced rank enrichment}

The kernel parameters provide additional directions that can
compensate for rank deficiencies of the model.

\begin{proposition}[Kernel-induced rank enrichment]
\label{prop:kernel_rank}
Suppose that the model derivative $D_\theta\Phi_\lambda(\theta)$ has
rank $r < \min(p, K\!+\!1)$.  If there exists a kernel direction
$b$ such that
\[
  \frac{\partial \Phi_\lambda}{\partial \lambda_b}(\theta)
  \;\notin\;
  \operatorname{Im}\bigl(D_\theta\Phi_\lambda(\theta)\bigr),
\]
then the full derivative $DF(\theta,\lambda)$ has rank at least
$r+1$.  More generally, if $D_\lambda F$ contributes $\ell$ linearly
independent directions not contained in
$\operatorname{Im}(D_\theta F)$, then
$\operatorname{rank} DF \geq r + \ell$.
\end{proposition}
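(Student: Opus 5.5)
The plan is pure linear algebra on the decomposition \eqref{eq:DF_decomposition}. We have $\operatorname{Im} DF(\theta,\lambda) = \operatorname{Im} D_\theta F + \operatorname{Im} D_\lambda F$, where $D_\theta F = D_\theta\Phi_\lambda(\theta)$ has rank $r$ by hypothesis. I would set $V := \operatorname{Im} D_\theta F \subset \R^{K+1}$, so that $\dim V = r$. The rank of $DF$ is then $\dim(V + \operatorname{Im} D_\lambda F)$, and the task is to bound this from below.

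For the first claim, let $w := \partial\Phi_\lambda/\partial\lambda_b(\theta)$, which is the $b$-th column of $D_\lambda F$. Since $w \notin V$, the subspace $V + \operatorname{span}\{w\}$ has dimension $r+1$. Indeed, if $v_1,\dots,v_r$ is a basis of $V$, a nontrivial relation $\sum c_i v_i + c\,w = 0$ forces $c = 0$, since otherwise $w \in V$. Hence $c_i = 0$ for all $i$. Since $V + \operatorname{span}\{w\} \subset \operatorname{Im} DF$, we get $\operatorname{rank} DF \geq r+1$.

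For the general claim, I first need to fix what ``contributes $\ell$ linearly independent directions not contained in $\operatorname{Im}(D_\theta F)$'' means, and this is the only real subtlety. The literal reading, namely $\ell$ independent vectors each lying outside $V$, does not suffice: two independent vectors $w_1, w_2 \notin V$ may have $w_1 - w_2 \in V$. I will therefore take the hypothesis to mean that there are vectors $w_1,\dots,w_\ell \in \operatorname{Im} D_\lambda F$ whose images in the quotient $\R^{K+1}/V$ are linearly independent. Equivalently, $\operatorname{span}\{w_i\} \cap V = \{0\}$ and the $w_i$ are independent, and I will add a sentence saying so explicitly.

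Under this reading, the same argument as in the first claim shows that $\{v_1,\dots,v_r,w_1,\dots,w_\ell\}$ is linearly independent. Alternatively, use the dimension formula
\[
\dim(V + W) = \dim V + \dim W - \dim(V\cap W)
\]
with $W = \operatorname{span}\{w_i\}$, which gives $\dim(V + W) = r + \ell$. Since $V + W \subset \operatorname{Im} DF$, this yields $\operatorname{rank} DF \geq r+\ell$.

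The assumption $r < \min(p, K+1)$ plays no role in the proof. It only guarantees that there is room in the target for enrichment to occur.
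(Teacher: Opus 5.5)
Your proof is correct and follows the paper's argument: $\operatorname{Im} DF = \operatorname{Im} D_\theta F + \operatorname{Im} D_\lambda F$, and each admissible kernel direction raises the dimension of this sum by one. You are right to read the general hypothesis as linear independence modulo $\operatorname{Im}(D_\theta F)$: the paper's one-line ``each direction outside the image adds one'' needs exactly this reading (your observation that $w_1-w_2\in V$ is possible shows the literal reading fails), and it is the form in which the paper itself invokes the result in the proof of Proposition~\ref{prop:graphical_transversality}.
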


\begin{proof}
The image of $DF$ is
$\operatorname{Im}(D_\theta F) + \operatorname{Im}(D_\lambda F)$.
Each kernel direction outside $\operatorname{Im}(D_\theta F)$
increases the dimension of this sum by at least one.
\end{proof}

Thus the kernel acts as a source of supplementary directions,
lifting degeneracies that the model alone cannot resolve.

\subsection{Application to statistical degeneracies}

We now translate these conditions to the main types of degeneracy.

\paragraph{Type I (non-identifiability).}
Self-intersections of $\Phi_\lambda$ correspond to failure of
injectivity.  By the multijet transversality theorem
(Theorem~\ref{thm:multijet}), a sufficient condition for
transversality to the self-intersection diagonal in the multijet
space is that the full map $F$ is a submersion
(Theorem~\ref{thm:submersivity}).  A weaker sufficient condition is
that $\Phi_\lambda$ is an immersion:
$\operatorname{rank} D_\theta\Phi_\lambda(\theta) = p$.

\paragraph{Type II (singular information).}
The weak information matrix is
${}^{(\varphi_\lambda)}G(\theta) =
D_\theta\Phi_\lambda(\theta)^\top D_\theta\Phi_\lambda(\theta)$.
Thus $\det {}^{(\varphi_\lambda)}G(\theta) > 0$ if and only if
$\operatorname{rank} D_\theta\Phi_\lambda(\theta) = p$.  By
Proposition~\ref{prop:kernel_rank}, even if
$D_\theta\Phi_\lambda$ is rank-deficient, the kernel derivatives can
restore full rank for the joint map~$DF$, and the parametric
transversality theorem then ensures that for generic~$\lambda$ the
information matrix is nonsingular.

\paragraph{Type III (moment indeterminacy).}
Moment indeterminacy corresponds to the feature map failing to
separate distributions.  A sufficient condition for the tilted
measures $P_\varphi$ to be moment-determinate is that the functions
$\{x^j\varphi_\lambda(x) : j = 0,\ldots,K\}$ generate a
measure-determining class---for example, when $\varphi_\lambda$
provides sufficient decay to ensure the Carleman condition for
$P_{\varphi_\lambda}$.

\subsection{Examples and model-specific verification}

We illustrate the verifiable transversality conditions in four
settings.

\subsubsection{One-parameter location family}

Consider a location family $\{P_\mu : \mu \in \R\}$ with Gaussian
kernel $\varphi_s(x) = (2\pi s^2)^{-1/2}\exp(-x^2/(2s^2))$,
$s > 0$.  The joint map is
$F(\mu,s) = {}^{(\varphi_s)}m_0(\mu)$.  Its derivatives are
\begin{align*}
  \frac{\partial F}{\partial \mu}
  &\;=\;
  \E_\mu\!\left[\varphi_s(X)\,
  \frac{\partial \log f(X;\mu)}{\partial\mu}\right],
  \\[4pt]
  \frac{\partial F}{\partial s}
  &\;=\;
  \E_\mu\!\left[\frac{X^2}{s^3}\,\varphi_s(X)\right].
\end{align*}
The $\mu$-derivative changes sign (it is the covariance of the
kernel and the score), while the $s$-derivative is strictly positive
(a moment of a positive function).  Hence the $1\times 2$ Jacobian
$DF(\mu,s) = (\partial_\mu F,\;\partial_s F)$ has rank~$1$ at every
point, so $F$ is a submersion and
Theorem~\ref{thm:submersivity} gives transversality to any
degeneracy stratum.

\subsubsection{Log-normal family}

Let $X = \exp(\mu + \sigma Z)$ with $Z \sim N(0,1)$, and consider
weak moments with $\varphi_s(x) = \exp(-x^2/(2s^2))$.

\begin{proposition}[Transversality in the log-normal model]
\label{prop:lognormal_transversality}
For each $s > 0$ and $(\mu,\sigma) \in \Theta$, there exist moment
orders $j_0 < j_1$ (depending on $(\mu,\sigma,s)$) such that the
Jacobian $D_{(\mu,\sigma)}\Phi_{\varphi_s}$ has rank~$2$ at
$(\mu,\sigma)$, where
$\Phi_{\varphi_s}(\mu,\sigma) =
\bigl({}^{(\varphi_s)}m_{j_0},\; {}^{(\varphi_s)}m_{j_1}\bigr)$.
In particular, for any compact $\Theta_0 \subset \Theta$, there
exists a finite set of moment orders $j_0 < \cdots < j_K$ such that
the feature map
$\Phi_{\varphi_s} : \Theta_0 \to \R^{K+1}$ is an immersion.
\end{proposition}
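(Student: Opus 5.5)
We reduce the rank-$2$ claim to a linear-independence statement about sequences indexed by the moment order, and then obtain the immersion statement from lower semicontinuity of rank plus compactness. Write $m_j(\mu,\sigma) = {}^{(\varphi_s)}m_j(\mu,\sigma) = \E[X^j\varphi_s(X)]$. Substituting $X = e^{\mu+\sigma Z}$ gives
\[
  m_j(\mu,\sigma) \;=\; \int_{\R} e^{j(\mu+\sigma z)}\,\exp\!\bigl(-e^{2(\mu+\sigma z)}/(2s^2)\bigr)\,\phi(z)\,dz ,
\]
where $\phi$ is the standard normal density. The super-exponential decay of $\exp(-e^{2y}/(2s^2))$ as $y\to\infty$ justifies differentiating under the integral. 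Setting $g(y) = \exp(-e^{2y}/(2s^2))$ and $y = \mu+\sigma z$, we get
\[
  \partial_\mu m_j = \E\bigl[(j + y\,\text{-terms})\cdots\bigr],
\]
and more cleanly
\[
  \partial_\mu m_j = \E\bigl[X^j\varphi_s(X)\,(j - X^2/s^2)\bigr],
  \qquad
  \partial_\sigma m_j = \E\bigl[Z\,X^j\varphi_s(X)\,(j - X^2/s^2)\bigr].
\]
Let $h(X) := \varphi_s(X)\,(j - X^2/s^2)$ denote the common factor; note it depends on $j$.

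The rank of the $2\times 2$ Jacobian built from orders $j_0<j_1$ is $2$ exactly when the two vectors $(\partial_\mu m_{j_k})_k$ and $(\partial_\sigma m_{j_k})_k$ are independent. So it suffices to show that the infinite sequences $a_j = \partial_\mu m_j$ and $b_j = \partial_\sigma m_j$, for $j \ge 0$, are not proportional. If every $2\times2$ minor vanished, there would be $(\alpha,\beta)\neq 0$ with $\alpha a_j + \beta b_j = 0$ for all $j$. Writing $W = \log X$, this says
\[
  \E\bigl[X^j\,\psi(X)\bigr] = 0 \quad\text{for all } j \ge 0,
  \qquad
  \psi(x) = \varphi_s(x)\,(\alpha + \beta(\log x - \mu)/\sigma)\,x^{-2}\cdots,
\]
after splitting $j - X^2/s^2$. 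More directly, it says $\int x^j\,\varphi_s(x)\,\bigl[\alpha\partial_\mu f + \beta\partial_\sigma f\bigr](x)\,dx = 0$ for all $j$ (integrating by parts is unnecessary, since $\partial_\theta m_j = \langle \partial_\theta T_\theta, x^j\varphi_s\rangle$ by the formula for the model component of the derivative).

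The key step is to show that the signed measure $\nu(dx) = \varphi_s(x)\,[\alpha\partial_\mu f + \beta\partial_\sigma f](x)\,dx$ on $(0,\infty)$ is determined by its moments. Because of the factor $\varphi_s$, we have $|\nu|(dx) \le C\,e^{-x^2/(2s^2)}(1+|\log x|^2)\,f\,dx$, so its moments grow at most like Gaussian moments and Carleman's condition holds. Here the kernel genuinely cures the log-normal indeterminacy; this is the Type~III remark in a sharper form. Hence $\nu = 0$, so $\alpha\partial_\mu f + \beta\partial_\sigma f \equiv 0$ on $(0,\infty)$. Since $\varphi_s>0$, this contradicts the linear independence of the log-normal scores $(\log x-\mu)/\sigma^2$ and $((\log x-\mu)^2-\sigma^2)/\sigma^3$, which are a degree-$1$ and a degree-$2$ polynomial in $\log x$. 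Therefore some minor with orders $j_0<j_1$ is nonzero at $(\mu,\sigma)$. I expect the main obstacle to be this determinacy step for a signed measure: one must check that Carleman applies to $\nu^\pm$ separately, or instead argue via the entire moment generating function $\int e^{tx}\,d\nu$, which is finite for all $t$ thanks to the Gaussian decay. I would use the latter route, since it gives analyticity of the Fourier transform directly.

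For the compact statement, each nonvanishing minor remains nonzero on an open neighbourhood of its point, by continuity of the $C^1$ derivatives. Cover $\Theta_0$ by finitely many such neighbourhoods, and take the union of the finitely many orders involved, ordered as $j_0<\cdots<j_K$. At each point of $\Theta_0$, the full Jacobian contains a nonsingular $2\times2$ minor, so it has rank $2 = p$. Hence $\Phi_{\varphi_s}$ is an immersion on $\Theta_0$.
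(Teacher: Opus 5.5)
Your proposal is correct, and its skeleton matches the paper's proof. Both arguments:
\begin{enumerate}
\item reduce ``every $2\times2$ minor vanishes'' to a single relation $\alpha\,\partial_\mu m_j+\beta\,\partial_\sigma m_j=0$ for all $j$;
\item use the Gaussian decay of $\varphi_s$ to force $\alpha s_\mu+\beta s_\sigma=0$;
\item contradict the linear independence of the log-normal scores;
\item obtain the immersion from openness of the rank condition and compactness.
\end{enumerate}

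The genuine difference is the uniqueness lemma in step 2. The paper passes to the tilted probability $Q$ and uses Carleman's condition to get M-determinacy. It then invokes the nontrivial fact that monomials are total in $L^2(Q)$ for a determinate measure (Riesz). So $\alpha s_\mu+\beta s_\sigma\in L^2(Q)$, being orthogonal to every $x^j$, must vanish.

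You instead work with the signed measure $\nu=\varphi_s(\alpha\,\partial_\mu f+\beta\,\partial_\sigma f)\,dx$. You need only that $\int e^{t|x|}\,d|\nu|<\infty$ for every $t$. Then $t\mapsto\int e^{tx}\,d\nu$ is entire, with Taylor coefficients $\int x^j\,d\nu/j!=0$. Hence the Fourier transform of $\nu$ vanishes and $\nu=0$.

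Your route is more elementary and self-contained. It needs no density theorem, and it sidesteps determinacy for signed measures, which you correctly flagged as the delicate point of a Carleman-based argument. The paper's route has the advantage of reusing the same M-determinacy of $Q$ that drives the injectivity part~(iii) of Proposition~\ref{prop:lognormal-transversality}.

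Your rank-one formulation is also slightly tighter than the paper's ratio argument. That argument divides by $\E_Q[X^j s_\sigma]$ and leaves implicit the case where all these quantities vanish.

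The abortive intermediate displays (the ``$y$-terms'' line and the formula for $\psi$) and the $z$-representation of the derivatives are never used, and should simply be deleted.
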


The proof is given in Appendix~\ref{app:lognormal_proofs}.

\subsubsection{Weak Stein discrepancies}

Let $P_\theta$ be a target family with Stein operator
$\mathcal{A}_\theta$, and define the weak Stein features
\[
  \Psi_{\varphi_\lambda}(\theta)
  \;=\;
  \bigl(
  \E_\theta[\mathcal{A}_\theta f_k(X)\,\varphi_\lambda(X)]
  \bigr)_{k=1}^{K}.
\]
The model manifold is the zero set
$\Psi_{\varphi_\lambda}^{-1}(0)$.

\begin{proposition}[Transversality of the weak Stein map]
\label{prop:stein_transversality}
Suppose that:
\begin{enumerate}[label=\textnormal{(\alph*)}]
\item the class
  $\{\mathcal{A}_\theta f_k \cdot \varphi_\lambda :
  k=1,\ldots,K\}$
  is measure-determining, and
\item the Jacobian
  $D_{(\theta,\lambda)}\Psi_{\varphi_\lambda}(\theta)$ is
  surjective at every point of
  $\Psi_{\varphi_\lambda}^{-1}(0)$.
\end{enumerate}
Then the zero set $\Psi_{\varphi_\lambda}^{-1}(0)$ is a smooth
submanifold, and the joint map $(\theta,\lambda) \mapsto
\Psi_{\varphi_\lambda}(\theta)$ is transversal to the zero section.
\end{proposition}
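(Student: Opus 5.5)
The plan is to treat the weak Stein map as a joint map
$G:\Theta\times\Lambda\to\R^K$, $G(\theta,\lambda)=\Psi_{\varphi_\lambda}(\theta)$, and to view the ``zero section'' as the point submanifold $D=\{0\}\subset\R^K$. This submanifold has codimension $K$, normal space $N_0D=\R^K$ and tangent space $T_0D=\{0\}$. Transversality $G\pitchfork\{0\}$ then means exactly that $DG(\theta,\lambda)$ is surjective at every point of $G^{-1}(0)$. This is hypothesis (b) verbatim, so the transversality claim follows at once from Definition~\ref{def:transversality}. Equivalently, one can invoke Corollary~\ref{cor:normal_rank} with $\pi_N=\mathrm{id}$ and $c=K$, or the local form of Theorem~\ref{thm:submersivity} restricted to the preimage. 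Lemma~\ref{lem:component_transversality} also shows that it suffices for the model and kernel columns, $D_\theta\Psi$ and $D_\lambda\Psi$, jointly to span $\R^K$.

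For the submanifold claim, I will apply Proposition~\ref{prop:consequences}(a), i.e.\ the regular value theorem. It shows that the joint zero set $G^{-1}(0)\subset\Theta\times\Lambda$ is a smooth submanifold of codimension $K$, of class $C^r$ when the model and kernel family are $C^r$. Differentiability of $G$ in $(\theta,\lambda)$ comes from \eqref{eq:Dtheta}--\eqref{eq:Dlambda}, adapted to the test functions $\mathcal{A}_\theta f_k\,\varphi_\lambda$. Here I must also differentiate $\mathcal{A}_\theta$ in $\theta$, which I will assume is smooth as an operator into $\cS$. I will then pass to the fibre $\Psi_{\varphi_\lambda}^{-1}(0)\subset\Theta$ for fixed $\lambda$ via Theorem~\ref{thm:parametric}. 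For a residual set of $\lambda$, the restriction $\Psi_{\varphi_\lambda}\pitchfork\{0\}$ holds, so the fibre is a smooth submanifold of $\Theta$ of codimension $K$, or empty.

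Hypothesis (a) enters only to interpret the zero set. Since $\E_\theta[\mathcal{A}_\theta f\,]=0$ by the Stein identity, the true model points satisfy $\Psi=0$. Measure-determinacy ensures that a vanishing weak Stein vector pins down $P_\theta$, so the zero set really is the model manifold and not a spurious larger locus. I expect the main obstacle to be reconciling the statement ``$\Psi_{\varphi_\lambda}^{-1}(0)$ is a smooth submanifold'' for a fixed $\lambda$ with a hypothesis (b) that is only about the joint Jacobian. The honest conclusion is smoothness of the joint zero set, together with smoothness of the fibres for generic $\lambda$. I will state it in that form, and remark that if $D_\theta\Psi$ alone is already surjective, the fibre is smooth for every $\lambda$.
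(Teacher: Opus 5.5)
Your proposal is correct and follows the paper's own sketch. Hypothesis~(b) is literally transversality of the joint map to $\{0\}$; the paper routes this through Theorem~\ref{thm:submersivity}, whose pointwise proof is all that is needed on the zero set. The submanifold claim then comes from the preimage theorem, and (a) plays no logical role.

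Your caution about the fixed-$\lambda$ zero set is well founded and sharper than the paper. The paper's sketch reads smoothness of $\Psi_{\varphi_\lambda}^{-1}(0)$ directly off the preimage theorem, but joint surjectivity only gives smoothness of the joint zero set in $\Theta\times\Lambda$. For instance, for $\Psi(\theta,\lambda)=\theta_1^2-\theta_2^2-\lambda$ the joint Jacobian $(2\theta_1,-2\theta_2,-1)$ is everywhere onto $\R$, yet the fibre at $\lambda=0$ is a cross. So your formulation is the right reading of the statement: the joint zero set is smooth of codimension $K$, fibres are smooth for residual $\lambda$ by Theorem~\ref{thm:parametric}, and they are smooth for every $\lambda$ when $D_\theta\Psi$ is itself onto.
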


\begin{proof}[Proof Sketch]
Condition~(a) ensures that the zero set characterises the model
(injectivity).  Condition~(b) is the submersivity hypothesis of
Theorem~\ref{thm:submersivity}, which gives transversality to the
zero section.  The preimage theorem then implies that the zero set
is a smooth submanifold.
\end{proof}

\begin{remark}
The measure-determining property alone guarantees injectivity but
not surjectivity of the derivative.  Condition~(b) is the
additional requirement that ensures transversality;
it can be verified by checking that the Stein operator and kernel
variations together generate enough directions in $\R^K$.
\end{remark}

\subsubsection{Graphical models}

Let $T_\theta$ be a parametric family defined by a graphical model
on a graph $G = (V,E)$, and consider weak features
${}^{(\varphi_\lambda)}m_j(\theta) =
\E_\theta[g_j(X)\,\varphi_\lambda(X)]$,
where $\{g_j\}$ are sufficient statistics or interaction functions.

\begin{proposition}[Transversality in graphical models]
\label{prop:graphical_transversality}
Suppose that:
\begin{enumerate}[label=\textnormal{(\alph*)}]
\item the $(K\!+\!1)\times p$ matrix with entries
  $\E_\theta\bigl[\,g_j\,\varphi_\lambda\,
  \partial_{\theta_a}\log p_\theta\,\bigr]$,
  $a = 1,\ldots,p$, $j = 0,\ldots,K$, has full column rank~$p$ at
  every $\theta\in\Theta$, and
\item the kernel derivatives
  $\partial_{\lambda_b}\varphi_\lambda$ generate at least
  $K\!+\!1-p$ directions not contained in
  $\operatorname{Im}(D_\theta\Phi_\lambda)$
  (one direction suffices when $K\!+\!1 = p\!+\!1$).
\end{enumerate}
Then $F(\theta,\lambda)$ is transversal to any smooth degeneracy
stratum.
\end{proposition}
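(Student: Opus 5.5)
The plan is to reduce the statement to Theorem~\ref{thm:submersivity}: I will show that under (a) and (b) the Jacobian $DF(\theta,\lambda)$ has rank $K+1$ at every point. Then $F$ is transversal to every smooth submanifold $D\subset\R^{K+1}$, and in particular to any degeneracy stratum. The argument has three steps: identify the model block with the matrix in (a), bound the rank of the joint Jacobian from below using Proposition~\ref{prop:kernel_rank}, and then conclude by submersivity.

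\emph{Step 1.} Assume $T_\theta$ has a density $p_\theta$ that may be differentiated under the pairing. This is the setting in which the score $\partial_{\theta_a}\log p_\theta$ in (a) makes sense. Applying \eqref{eq:Dtheta} with $x^j$ replaced by $g_j$ gives
\[
\frac{\partial}{\partial\theta_a}{}^{(\varphi_\lambda)}m_j(\theta)
=\int g_j\varphi_\lambda\,\partial_{\theta_a}p_\theta
=\E_\theta\bigl[g_j\varphi_\lambda\,\partial_{\theta_a}\log p_\theta\bigr].
\]
So $D_\theta F=D_\theta\Phi_\lambda(\theta)$ is exactly the $(K+1)\times p$ matrix of hypothesis (a). Its full column rank gives $\operatorname{rank}D_\theta F=p$, and therefore $\dim\operatorname{Im}(D_\theta F)=p$.

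\emph{Step 2.} By \eqref{eq:Dlambda}, the columns of $D_\lambda F$ are $\partial_{\lambda_b}\Phi_\lambda(\theta)=\bigl(\E_\theta[g_j\,\partial_{\lambda_b}\varphi_\lambda]\bigr)_j$. I read hypothesis (b) as saying that these vectors contain $\ell\ge K+1-p$ linearly independent directions that remain independent modulo $\operatorname{Im}(D_\theta F)$. The general form of Proposition~\ref{prop:kernel_rank}, applied with $r=p$, then gives $\operatorname{rank}DF\ge p+\ell\ge K+1$. Since $DF$ has only $K+1$ rows, its rank equals $K+1$, so $DF$ is surjective at $(\theta,\lambda)$. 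This holds at every point because (a) and (b) are assumed pointwise.

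\emph{Step 3.} Theorem~\ref{thm:submersivity} now yields $F\pitchfork D$ for every smooth $D$. For a stratified degeneracy locus, I apply this stratum by stratum, noting that surjectivity does not depend on the stratum. The parenthetical case $K+1=p+1$ corresponds to $\ell=1$ and is exactly the first clause of Proposition~\ref{prop:kernel_rank}.

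The main obstacle is Step 2. Read literally, "not contained in $\operatorname{Im}(D_\theta\Phi_\lambda)$" does not imply independence modulo that image: two kernel directions could each lie outside the image while their difference lies inside it. I would make the intended reading explicit as linear independence in the quotient $\R^{K+1}/\operatorname{Im}(D_\theta F)$, which is what Proposition~\ref{prop:kernel_rank} uses, and state this at the start of the proof. A secondary point is justifying the differentiation under the pairing in Step 1, which follows from the $C^r$ hypothesis on the model together with the Schwartz decay of $g_j\varphi_\lambda$ when the $g_j$ have at most polynomial growth.
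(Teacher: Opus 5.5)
Your proposal is correct and matches the paper's proof. You identify $D_\theta F$ with the matrix in (a), read (b) as linear independence of the induced columns modulo $\operatorname{Im}(D_\theta F)$ (the paper reads it the same way), obtain $\operatorname{rank} DF \ge K+1$ from Proposition~\ref{prop:kernel_rank}, and conclude with Theorem~\ref{thm:submersivity}; your uniform bound $\ell \ge K+1-p$ absorbs the paper's separate case $K+1=p$, and, like the paper, you apply Proposition~\ref{prop:kernel_rank} with $r=p$, which its stated hypothesis $r<\min(p,K+1)$ formally excludes, though its one-line proof via $\operatorname{Im} DF=\operatorname{Im} D_\theta F+\operatorname{Im} D_\lambda F$ applies unchanged.
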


\begin{proof}
The Jacobian of the joint map $F(\theta,\lambda)$ decomposes as
$DF = (D_\theta F \mid D_\lambda F)$ by~\eqref{eq:DF_decomposition}.

The entries of $D_\theta F$ are the derivatives
\[
  \frac{\partial}{\partial\theta_a}\,
  {}^{(\varphi_\lambda)}\!m_j(\theta)
  \;=\;
  \E_\theta\!\bigl[
  g_j(X)\,\varphi_\lambda(X)\,
  \partial_{\theta_a}\log p_\theta(X)
  \bigr],
  \qquad a = 1,\ldots,p,\; j = 0,\ldots,K.
\]
Condition~(a) states that this $(K\!+\!1)\times p$ matrix has rank~$p$
(full column rank) at every $\theta \in \Theta$; equivalently, the
columns $D_\theta F$ span a $p$-dimensional subspace of~$\R^{K+1}$.

If $K+1 = p$, then $D_\theta F$ is already surjective and the
conclusion follows from Theorem~\ref{thm:submersivity}.

If $K+1 > p$, then $\operatorname{Im}(D_\theta F)$ has dimension~$p$
and the cokernel has dimension $K+1-p > 0$.  Condition~(b) supplies
$K+1-p$ kernel directions $\partial_{\lambda_b}\varphi_\lambda$
whose induced columns
\[
  \frac{\partial}{\partial\lambda_b}\,
  {}^{(\varphi_\lambda)}\!m_j(\theta)
  \;=\;
  \E_\theta\!\bigl[
  g_j(X)\,\partial_{\lambda_b}\varphi_\lambda(X)
  \bigr]
\]
are linearly independent modulo $\operatorname{Im}(D_\theta F)$.  By
Proposition~\ref{prop:kernel_rank}, the combined Jacobian $DF$
therefore has rank at least $p + (K+1-p) = K+1$, so $DF$ is
surjective.
Theorem~\ref{thm:submersivity} then gives transversality to any
smooth degeneracy stratum.
\end{proof}

For a Gaussian graphical model with precision matrix
$\Omega = (\omega_{ij})$ and sufficient statistics
$g_{ij}(x) = x_i x_j$ for each edge $(i,j) \in E$,
condition~(a) reduces to the requirement that the set of
kernel-weighted second moments
$\{\E_\theta[X_i X_j\,\varphi_\lambda(X)] : (i,j)\in E\}$ has
a Jacobian with respect to the free entries of~$\Omega$ that has
full column rank.  Since a Gaussian graphical model is a regular
exponential family for \emph{every} graph---chordal or not---with
canonical parameter the free entries of~$\Omega$, the classical
($\varphi \equiv 1$) Jacobian has full column rank everywhere; the
kernel-weighted Jacobian inherits full rank for Gaussian kernels of
sufficiently large scale (the normalised weak moments and their
$\theta$-derivatives converge to their classical counterparts
uniformly on compacta, by dominated convergence), and full rank holds
for generic kernels by Theorem~\ref{thm:main}.  The
chordal/non-chordal distinction is therefore invisible at this
first-order level; it enters through the higher-order structure
discussed in Example~\ref{ex:graphical}.

\subsection{Summary}

Across these examples, a common mechanism emerges.  The model
derivatives $D_\theta F$ encode the intrinsic geometry of the
parametric family; the kernel derivatives $D_\lambda F$ provide
supplementary directions.  By
Lemma~\ref{lem:component_transversality}, transversality holds
whenever these two sources of variation, projected onto the normal
space of the degeneracy stratum, jointly span that normal space.
In practice, the cleanest verification is often through
submersivity of the full Jacobian
(Theorem~\ref{thm:submersivity}), which ensures transversality to
\emph{all} strata simultaneously.

\section{Infinite-Dimensional Extensions}
\label{sec:infinite_dim}

\subsection{Classical infinite-dimensional transversality}

The distributional framework in full generality involves infinite-dimensional spaces.
The classical theory extends via the Sard--Smale
theorem~\cite{Smale1965} (for Fredholm maps between separable Banach
manifolds, regular values form a residual set) and
Abraham's transversality theorem~\cite{Abraham1963} (if
$F:X\times\Lambda\to Y$ is $C^q$, $F\pitchfork S$, and each
$F_\lambda$ is Fredholm, then $F_\lambda\pitchfork S$ for residual
$\lambda$).

In our setting, $X=\Theta$, $\Lambda=\cS(\R^d)$, $Y$ is a feature
space, $F(\theta,\varphi)=\Phi_\varphi(\theta)$, and $S=D$ is a
degeneracy set.  Since $\Theta$ is finite-dimensional, each
$\Phi_\varphi$ is trivially Fredholm (of index~$p$).  Abraham's
theorem asserts: if the full map $F$ is transversal to $D$, then for
a residual set of kernels, $\Phi_\varphi\pitchfork D$.

The Fredholm condition is the key mechanism by which this
infinite-dimensional machinery reduces to finite-dimensional linear
algebra: a Fredholm operator has finite-dimensional kernel and
cokernel, so the transversality question---which concerns the
surjectivity of a linear map modulo a tangent space---becomes a
finite-rank calculation even when the ambient spaces are
infinite-dimensional.  In our parametric setting, since $\Theta$ is
finite-dimensional, each $\Phi_\varphi$ is automatically Fredholm,
and the reduction is immediate.

Two important directions for extension arise.  First, Abraham's and
Smale's theorems are formulated for maps between separable
\emph{Banach} manifolds, whereas the Schwartz space $\cS(\R^d)$ and
its dual $\cS'(\R^d)$ carry Fr\'echet (not Banach) topologies.  A
full verification that the Fredholm hypotheses hold in general
distributional models---or an adaptation of the theory to the
Fr\'echet setting---is a non-trivial analytic problem that is beyond
the scope of this paper and will be addressed in future work.
Second, for semiparametric and nonparametric models, where the
parameter space $\Theta$ is itself infinite-dimensional,
Quinn's extension~\cite{Quinn1979} provides the appropriate framework.
In the semiparametric case, the parameter decomposes as
$\theta = (\psi, \eta)$, where $\psi$ is a finite-dimensional
interest parameter and $\eta$ is an infinite-dimensional nuisance
component (e.g.\ a baseline hazard or a mixing density).  The
feature map then maps a product of a finite-dimensional and an
infinite-dimensional manifold into the feature space, and the
transversality question becomes whether the kernel can place
the feature map in generic position with respect to both the
interest and nuisance directions simultaneously---a question that
the Fredholm framework is precisely designed to address, since it
reduces the infinite-dimensional transversality condition to a
finite-rank check on the cokernel.  For fully
nonparametric models, the entire parameter space is
infinite-dimensional, and the relevant transversality theorems
must be formulated in the Banach manifold setting of Smale and
Quinn, but the underlying principle remains the same: the Fredholm
property ensures that the obstruction to transversality is
finite-dimensional.  These observations can be made precise.  Although a fully general,
topology-free treatment remains open, the Fredholm reduction already yields a
clean \emph{conditional} statement---the exact infinite-dimensional analogue of
Theorem~\ref{thm:main}---in which the remaining analytic difficulty is isolated
as a single explicit hypothesis.

\subsection{A conditional infinite-dimensional theorem}
\label{subsec:conditional-infinite}

The finite-dimensional result (Theorem~\ref{thm:main}) was a corollary of
parametric transversality (Theorem~\ref{thm:parametric}); its
infinite-dimensional counterpart is, in the same way, a corollary of the
Sard--Smale and Abraham theorems---\emph{provided} the two hypotheses that hold
automatically when $\Theta$ is finite-dimensional are supplied explicitly.  We
keep the kernel family finite-dimensional, exactly as in
Theorem~\ref{thm:main}, so that the Fr\'echet topology of $\cS(\R^d)$ never
enters; the burden falls on the parameter and feature spaces.  We require:
\begin{enumerate}[label=\textnormal{(H\arabic*)}]
\item \emph{Banach regularity.}  $\Theta$ is a separable Banach manifold and
$\mathcal F$ a separable Banach feature space, and the joint feature map
$F(\theta,\lambda)=\Phi_{\varphi_\lambda}(\theta)\in\mathcal F$, with $\lambda$
in an open set $\Lambda\subset\R^q$, is of class $C^r$ for some
$r>\max\{0,\iota+q\}$, where $\iota$ is the index in~(H2); the bound
accounts for the fact that adjoining the $q$-dimensional kernel
parameter raises the Fredholm index of the joint map to $\iota+q$.
\item \emph{Fredholm property.}  For each $\lambda$ the partial map
$\Phi_{\varphi_\lambda}:\Theta\to\mathcal F$ is Fredholm---its differential
$D_\theta\Phi_{\varphi_\lambda}(\theta)$ has finite-dimensional kernel and
cokernel and closed range---with index $\iota$ independent of
$(\theta,\lambda)$.
\end{enumerate}

\begin{theorem}[Conditional infinite-dimensional weak transversality]
\label{thm:infinite}
Suppose \textnormal{(H1)--(H2)} hold, let $D\subset\mathcal F$ be a closed
$C^r$ submanifold of finite codimension $c$, and assume the joint map
$F\pitchfork D$.  Then for a residual set $\Lambda_D\subset\Lambda$ the
restricted feature map $\Phi_{\varphi_\lambda}$ is transversal to $D$ for every
$\lambda\in\Lambda_D$; consequently $\Phi_{\varphi_\lambda}^{-1}(D)$ is a $C^r$
submanifold of $\Theta$ of codimension $c$, or empty, and is empty for residual
$\lambda$ whenever $c>\iota$.
\end{theorem}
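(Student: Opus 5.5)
The proof follows the classical Abraham--Smale route. We reduce transversality of each slice to a regular-value statement for a projection. We then apply Sard--Smale to that projection, which is Fredholm of index $\iota+q-c$.

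\emph{Step 1 (the incidence manifold).} Since $F\pitchfork D$ and $D$ is a closed $C^r$ submanifold of finite codimension $c$, the preimage $W:=F^{-1}(D)\subset\Theta\times\Lambda$ is a $C^r$ Banach submanifold of codimension $c$. This is the Banach version of Proposition~\ref{prop:consequences}(a). Locally, write $D=g^{-1}(0)$ with $g:\mathcal F\to\R^c$ a $C^r$ submersion. Transversality makes $g\circ F$ a submersion near $W$, and its kernel splits because it has finite codimension, so the implicit function theorem applies. Separability of $W$ is inherited from $\Theta$ and $\Lambda$.

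\emph{Step 2 (Fredholm projection).} Let $\pi:W\to\Lambda$ be the restriction of the projection $(\theta,\lambda)\mapsto\lambda$. I will show that $\pi$ is Fredholm of index $\iota+q-c$ by a linear-algebra argument at a point $w=(\theta,\lambda)\in W$. Set $y=F(w)$ and let $Q:\mathcal F\to\mathcal F/T_yD\cong\R^c$ be the quotient map.

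Then $T_wW=\ker(Q\circ DF_w)$, and $d\pi_w(\xi,\eta)=\eta$. Hence $\ker d\pi_w=\{\xi:Q D_\theta F\,\xi=0\}$. Let $A:=Q\circ D_\theta\Phi_{\varphi_\lambda}(\theta)$. This is the composition of a Fredholm operator of index $\iota$ (by (H2)) with a surjection onto $\R^c$, so $A$ is Fredholm of index $\iota-c$.

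For the cokernel, a vector $\eta\in\R^q$ lies in $\operatorname{Im}d\pi_w$ iff $QD_\lambda F\,\eta\in\operatorname{Im}A$. Counting dimensions gives
\[
\dim\ker d\pi_w=\dim\ker A,\qquad
\operatorname{codim}\operatorname{Im}d\pi_w=\operatorname{codim}\operatorname{Im}A-\bigl(\dim\R^c/\operatorname{Im}A-\dim\R^c/(\operatorname{Im}A+\operatorname{Im}QD_\lambda F)\bigr)+\dots
\]
Equivalently, and more cleanly: $\operatorname{Im}A+\operatorname{Im}QD_\lambda F=\R^c$ by the transversality of $F$. From this, $\operatorname{coker}d\pi_w\cong\operatorname{coker}A$, and the index is $(\iota-c)+q$.

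The key pointwise fact is then the standard lemma: $\lambda$ is a regular value of $\pi$ if and only if $\Phi_{\varphi_\lambda}\pitchfork D$. Indeed, both conditions are equivalent to surjectivity of $A$ at every $\theta\in\Phi_{\varphi_\lambda}^{-1}(D)$, because $\operatorname{coker}d\pi_w\cong\operatorname{coker}A$.

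\emph{Step 3 (Sard--Smale and conclusions).} The map $\pi$ is $C^r$ with $r>\max\{0,\iota+q-c\}$; this is guaranteed by the bound in (H1), since $\iota+q\ge\iota+q-c$. The Sard--Smale theorem~\cite{Smale1965} therefore makes the regular values of $\pi$ residual in $\Lambda$. Call this set $\Lambda_D$. By Step 2, $\Phi_{\varphi_\lambda}\pitchfork D$ for every $\lambda\in\Lambda_D$, and then $\Phi_{\varphi_\lambda}^{-1}(D)$ is a $C^r$ submanifold of codimension $c$ or empty, by the preimage argument of Step 1 applied to the slice.

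If $c>\iota$, then at any point of $\Phi_{\varphi_\lambda}^{-1}(D)$ the operator $A$ would be surjective Fredholm with $\dim\ker A=\iota-c<0$. This is impossible, so the preimage is empty for all $\lambda\in\Lambda_D$.

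\emph{Main obstacle.} The delicate step is the Fredholm bookkeeping in Step 2. The identification of $\operatorname{coker}d\pi_w$ with $\operatorname{coker}A$ uses joint transversality essentially. One also has to check that $T_wW$ is a complemented subspace, so that $W$ is a genuine Banach submanifold and $\pi$ is a $C^r$ map between separable Banach manifolds. The closedness of the range of $A$, which follows from (H2) together with finite codimension of $T_yD$, is what makes both points work, and I would verify it first.
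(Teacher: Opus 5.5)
\paragraph{The step that fails.} Your Step~2 breaks at the claim that $A=Q\circ D_\theta\Phi_{\varphi_\lambda}(\theta)$ is Fredholm of index $\iota-c$.

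\begin{itemize}
\item The quotient map $Q:\mathcal F\to\mathcal F/T_yD\cong\R^c$ is not Fredholm. Its kernel $T_yD$ is infinite-dimensional, because (H2) with infinite-dimensional $\Theta$ forces $\mathcal F$ to be infinite-dimensional. So index additivity is unavailable.
\item Even when everything is finite-dimensional, $\operatorname{ind}Q=\dim T_yD$, not $-c$. The correct index of $A$ is then $\iota+\dim D$.
\item More bluntly, any bounded linear map from the infinite-dimensional space $T_\theta\Theta$ to $\R^c$ has a kernel of codimension at most $c$. This is the same observation the paper makes in Remark~\ref{rem:deferred-fredholm} about maps into finite-dimensional spaces.
\end{itemize}

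Hence $\ker d\pi_w\cong\ker A=(D_\theta\Phi_{\varphi_\lambda})^{-1}(T_yD)$ is infinite-dimensional and $\pi:W\to\R^q$ is not Fredholm. Sard--Smale therefore does not apply, and your emptiness argument ``$\dim\ker A=\iota-c<0$'' has no content. Step~1 is correct, and so is the lemma that $\lambda$ is a regular value of $\pi$ if and only if $\Phi_{\varphi_\lambda}\pitchfork D$. But together they only reduce the problem to a Sard theorem for a map from an infinite-dimensional manifold to $\R^q$, and that is false in general.

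\paragraph{Why it cannot be patched.} The statement itself fails in the regime it targets.

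\begin{itemize}
\item \emph{Emptiness clause.} Take $\Theta=\mathcal F=\ell^2$, $\Lambda=\R$, $F(\theta,\lambda)=\theta$ and $D=\{y:y_1=0\}$. Then (H1)--(H2) hold with $\iota=0$, $F\pitchfork D$ and $c=1>\iota$. Yet $\Phi_{\varphi_\lambda}^{-1}(D)=D\neq\varnothing$ for every $\lambda$.
\item \emph{The paper's own example.} In Example~\ref{ex:gaussian-sequence}, $\Phi_s$ is a local diffeomorphism with $\iota=0$. Its preimage of the hyperplane $\{y_1=y_2\}$ is the nonempty codimension-one set $\{\theta_1=\theta_2\}$.
\item \emph{Residual clause.} Let $f$ be Kupka's $C^\infty$ function on $\ell^2$ whose critical values contain an interval. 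Take $\mathcal F=\ell^2\times\R$, $F(\theta,\lambda)=(\theta,f(\theta)-\lambda)$ and $D=\ell^2\times\{0\}$. Here $\iota=-1$ and $F$ is a submersion. But $\Phi_{\varphi_\lambda}\pitchfork D$ if and only if $\lambda$ is a regular value of $f$, so $\Lambda_D$ is not residual.
\end{itemize}

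\paragraph{Comparison with the paper's proof.} The paper's proof hides exactly the defect your unpacking exposes. It cites Abraham's theorem as a black box and asserts a preimage dimension of $\iota-c$. In fact a transversal preimage of a codimension-$c$ submanifold has codimension $c$, and here it is infinite-dimensional.

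\paragraph{What is true.} Your argument does go through if $D$ is finite-dimensional:
\begin{itemize}
\item $Q\circ DF$ is then Fredholm.
\item $W$ is a manifold of dimension $\iota+q+\dim D$.
\item $\pi$ is Fredholm of index $\iota+\dim D$.
\item Consequently $\Phi_{\varphi_\lambda}^{-1}(D)$ has dimension $\iota+\dim D$, and is empty when $\dim D<-\iota$.
\end{itemize}
For $D$ of finite codimension you need finite-dimensional $\Theta$, which is Theorem~\ref{thm:main}.
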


\begin{proof}
By \textnormal{(H1)--(H2)} the joint map $F:\Theta\times\Lambda\to\mathcal F$ is
$C^r$ and Fredholm of index $\iota+q$ (adjoining the finite-dimensional
$\Lambda$ raises the index by~$q$), with $r>\max\{0,\iota+q\}$ as the
Sard--Smale theorem requires, and $F\pitchfork D$ by hypothesis.  The
Sard--Smale theorem~\cite{Smale1965} and Abraham's parametric transversality
theorem~\cite{Abraham1963} then yield $\Phi_{\varphi_\lambda}\pitchfork D$ for a
residual set of~$\lambda$.  The dimension count is the Fredholm analogue of
Proposition~\ref{prop:consequences}: a Fredholm map of index $\iota$
transversal to a submanifold of codimension $c$ has preimage of dimension
$\iota-c$, empty when $c>\iota$.
\end{proof}

\begin{remark}[What is genuinely deferred]
\label{rem:deferred-fredholm}
Theorem~\ref{thm:infinite} reduces the infinite-dimensional programme to a
single analytic input: the verification of \textnormal{(H1)--(H2)} for concrete
models---a Banach feature space in which the weak features depend smoothly
on~$\theta$, together with the Fredholm property of $D_\theta\Phi_\varphi$.  Two
points deserve emphasis.  First, \textnormal{(H2)} is not a formality: a feature
map from an infinite-dimensional $\Theta$ into a \emph{finite}-dimensional
feature space is never Fredholm (its kernel is infinite-dimensional), so an
infinite-dimensional feature space is genuinely required, and the Fredholm
property can fail when the nuisance directions are not relatively compact with
respect to the interest directions---precisely the regime in which
semiparametric efficiency is delicate.  Second, \emph{once}
\textnormal{(H2)} holds the cokernel is finite-dimensional, the transversality
obstruction is finite-dimensional, and a finite-dimensional rich kernel family
suffices to remove it, so the verification reduces to the same finite-rank
conditions as in Section~\ref{sec:transversality_condition_dim}.  In this sense
the additional difficulty of the infinite-dimensional theory is concentrated
entirely in establishing the Fredholm property.
\end{remark}

\begin{example}[Verifying \textnormal{(H1)--(H2)}: a Gaussian sequence model
with unknown variances]
\label{ex:gaussian-sequence}
We exhibit a genuinely infinite-dimensional, non-dominated family---an instance
of mechanism~(M2) of Section~\ref{sec:nondominated}, with the variance sequence
now the parameter---for which both hypotheses can be checked in closed form.  On
$\R^{\mathbb N}$ consider the product Gaussian laws
\[
  \gamma_\theta = \bigotimes_{k\ge 1} N(0,\theta_k),
  \qquad
  \theta=(\theta_k)_{k\ge1},\quad \theta_k\in[a,b],\ \ 0<a<1<b .
\]
By Kakutani's dichotomy, $\gamma_\theta$ and $\gamma_{\theta'}$ are equivalent
iff $\sum_k(\theta_k-\theta'_k)^2<\infty$ and mutually singular otherwise; since
the parameter set contains sequences with $\sum_k(\theta_k-1)^2=\infty$, the
family $\{\gamma_\theta\}$ is non-dominated and admits no likelihood.

\smallskip
\noindent\emph{Banach regularity \textnormal{(H1)}.}  Write
$\theta=\mathbf 1+h$ and take the parameter manifold
$\Theta=\{h\in c_0:\ a-1<h_k<b-1\ \forall k\}$, an open subset of the separable
Banach space $c_0$ of null sequences, with tangent space $c_0$.  Fix a Gaussian
instrument $\varphi_s(x)=e^{-x^2/(2s^2)}$ and define the weak second-moment
feature map coordinatewise,
\[
  \Phi_s(\theta)=\bigl(\mu(\theta_k;s)\bigr)_{k\ge1},
  \qquad
  \mu(v;s):=\langle N(0,v),\,x^2\varphi_s\rangle
  =\frac{s^3\,v}{(s^2+v)^{3/2}},
\]
the closed form following from a Gaussian integral.  Since $\theta_k\to1$ gives
$\mu(\theta_k;s)\to\mu(1;s)$, the map $\Phi_s$ sends $\Theta$ into
$\mu(1;s)\mathbf 1+c_0$ and is real-analytic, hence $C^\infty$.  Thus $\Theta$
and $\mathcal F=c_0$ are separable Banach manifolds and $\Phi_s$ is smooth:
\textnormal{(H1)} holds.

\smallskip
\noindent\emph{Fredholm property \textnormal{(H2)}.}  The differential is
diagonal,
\[
  D\Phi_s(\theta)=\mathrm{diag}\bigl(\mu'(\theta_k;s)\bigr),
  \qquad
  \mu'(v;s)=\frac{s^3\,(s^2-\tfrac12 v)}{(s^2+v)^{5/2}} .
\]
A diagonal operator on $c_0$ is an isomorphism---hence Fredholm of index
$\iota=0$---exactly when its entries are bounded away from $0$ and $\infty$.
Choosing the instrument with $s^2>b/2$ makes $\mu'(v;s)>0$ for all $v\in[a,b]$,
so $\inf_k|\mu'(\theta_k;s)|\ge\mu'(b;s)>0$; hence $D\Phi_s(\theta)$ is a
Banach-space isomorphism for every $\theta$, and \textnormal{(H2)} holds.

Consequently $\Phi_s$ is an immersion---indeed a local diffeomorphism onto its
image---so by Theorem~\ref{thm:infinite} (the non-immersion stratum is avoided
outright) the infinite-dimensional variance parameter is \emph{locally
identifiable} from the weak second moments, for every instrument with
$s^2>b/2$, even though the family admits no dominating measure.  The condition
$s^2>b/2$ is an instrument-genericity requirement of exactly the kind the theory
predicts: if instead $2s^2\in[a,b]$ then $\mu'(\cdot;s)$ vanishes at $v=2s^2$,
and a parameter with infinitely many coordinates $\theta_k\to 2s^2$ makes
$D\Phi_s$ fail to be bounded below---the operator becomes compact and
\textnormal{(H2)} fails, exactly as anticipated in
Remark~\ref{rem:deferred-fredholm}.
\end{example}

\section{The Degeneracy Stratification}
\label{sec:degeneracies}

\subsection{The feature map}

Given a kernel $\varphi\in\cS(\R^d)$, the \emph{feature map} is
\begin{equation}\label{eq:feature_map}
  \Phi_\varphi:\Theta\to\mathcal{F},
  \qquad
  \Phi_\varphi(\theta)
  = \bigl(\wm{0}(\theta), \wm{1}(\theta), \wm{2}(\theta),
  \ldots\bigr).
\end{equation}

\subsection{Five types of statistical degeneracy}

We identify five principal types of degeneracy, organised into a
stratification.

\begin{center}
\begin{tabular}{lll}
\toprule
\textbf{Type} & \textbf{Degeneracy} & \textbf{Jet condition} \\
\midrule
0 & Representation (no embedding) & Feature map undefined \\
I & Non-identifiability & Self-intersection of $\Phi_\varphi$ \\
II & Singular information & $\mathrm{rank}(D\Phi_\varphi) < p$ \\
III & Moment indeterminacy & Non-separation at distributional level \\
IV & Higher-order instability & Degeneracies of $j^r\Phi_\varphi$,
  $r\ge 2$ \\
\bottomrule
\end{tabular}
\end{center}

\medskip

\textbf{Type~0: Representation degeneracy.}
Some models (e.g.\ elliptically contoured distributions defined only
via their characteristic function) lack closed-form densities.  The
classical feature map cannot even be defined.  The kernel
\emph{creates} an embedding:
$\Phi_\varphi(\theta) = (\langle T_\theta, g_j\varphi\rangle)_j$
is well-defined and smooth even when no classical coordinate system
exists (see Example~\ref{ex:elliptical}).

\textbf{Type~I: Non-identifiability.}
$\Phi_\varphi(\theta_1)=\Phi_\varphi(\theta_2)$ with
$\theta_1\neq\theta_2$.  This is the self-intersection diagonal in
the multijet space.

\textbf{Type~II: Singular information.}
$\det\wG = 0$, i.e.\ $D\Phi_\varphi$ drops rank.  This is a
Thom--Boardman singularity~\cite{Boardman1967} of the $1$-jet.

\textbf{Type~III: Moment indeterminacy.}
The feature map fails to separate distributions (not just parameter
values), corresponding to M-indeterminacy in the classical moment
problem.

\begin{remark}[M-indeterminacy and transversality]
\label{rem:M-indeterminacy-transversality}
Type~III degeneracy admits a precise transversality interpretation:
the classical moment map fails to be transversal to the
self-intersection diagonal $\Delta$ in the product feature space.  A
positive kernel of exponential decay provides a generic perturbation
that restores transversality to~$\Delta$, by destroying the
oscillatory cancellations (Stieltjes perturbations) responsible for
moment coincidence.  This mechanism is developed in
Example~\ref{ex:M-determinacy} and illustrated concretely for the
log-normal family in
Proposition~\ref{prop:lognormal-transversality}.
\end{remark}

\textbf{Type~IV: Higher-order instability.}
Conditions on higher jets ($r\ge 2$): inflection points of weak
moment functions, vanishing curvature of the distributional metric,
or instabilities from complex dependency structures (e.g.\ non-chordal
graphical models; see Example~\ref{ex:graphical}).

\subsection{The weak transversality condition}

\begin{definition}
\label{def:weak_transversality}
A kernel $\varphi\in\cS(\R^d)$ satisfies the \emph{weak
transversality condition} (of order~$r$) if
$j^r\Phi_\varphi \pitchfork D_k$ for every stratum $D_k$.
\end{definition}

\begin{principle}[Generic weak transversality principle]
\label{conj:generic}
Under mild regularity conditions on the model, for a residual set of
kernels $\varphi\in\cS(\R^d)$, the feature map $\Phi_\varphi$
satisfies the weak transversality condition.
\end{principle}

\noindent
The principle should be viewed as a programme-level statement:
The finite-dimensional results of Sections~\ref{sec:finite_dim_thm}-~\ref{sec:transversality_condition_dim} 
provide rigorous evidence in concrete settings, while a full infinite-dimensional treatment is left
for future work.

\subsection{Connection with the singular limit}

As $\varphi_s\to 1$ (not in $\cS$), the feature map degenerates.
\emph{The limit $\varphi_s\to 1$ is a path in the kernel space that
leaves the residual set of transversal kernels and enters a degeneracy
stratum.}  This is not a pathology but a \emph{confirmation}: the
constant function is a degenerate kernel.

\section{Examples}
\label{sec:examples}

\subsection{The log-normal: M-indeterminacy as non-transversality}

The log-normal $L(\mu,\sigma^2)$ is M-indeterminate~\cite{Stoyanov2000}:
the classical moment map fails to be injective.  In the transversality
language, the classical feature map fails to be transversal to the
self-intersection diagonal.  The Stieltjes perturbation
$h(x)=\sin(2\pi\ln x)$ is the tangent direction along which the
moment map degenerates.

A Gaussian kernel $\varphi(x)=(2\pi)^{-1/2}e^{-x^2/2}$---a positive
kernel of exponential decay---breaks the theta-function symmetry
underlying the moment cancellation, restoring transversality (see
Example~\ref{ex:M-determinacy} for the general mechanism).  The
detailed analysis is developed in~\cite{D}.

\begin{proposition}[Transversality breaking of log-normal moment degeneracy]
\label{prop:lognormal-transversality}
Let $\{T_\theta : \theta = (\mu,\sigma) \in \Theta \subset \mathbb R \times (0,\infty)\}$
be the log-normal family, and let $\varphi_s(x) = (2\pi s^2)^{-1/2} e^{-x^2/(2s^2)}$
be the Gaussian kernel with scale $s>0$.

Let $\Phi_s : \Theta \to \mathbb R^{K+1}$ be the weak moment feature map
\[
\Phi_s(\theta)
=
\big(
{}^{(\varphi_s)}m_{j_0}(\theta), \ldots,
{}^{(\varphi_s)}m_{j_K}(\theta)
\big).
\]

Then:

\begin{enumerate}
\item[(i)] In the classical case (no kernel), the moment map fails to be injective
on the space of distributions (moment indeterminacy).

\item[(ii)] For each fixed $s>0$, the weak moment map $\Phi_s$ is
$C^\infty$ in $\theta$, and real-analytic under a holomorphic
dominated convergence argument (see Appendix~\ref{app:lognormal_proofs}).

\item[(iii)] For every $s>0$, the map $\Phi_s$ separates
log-normal distributions within the parametric family, i.e.
\[
\Phi_s(\theta_1)=\Phi_s(\theta_2)
\;\Rightarrow\;
\theta_1=\theta_2.
\]

\item[(iv)] Moreover, for each $s>0$ and each compact
$\Theta_0\subset\Theta$, there exists a finite set of weak moment
orders such that the corresponding weak moment map is an immersion
on~$\Theta_0$.
\end{enumerate}
\end{proposition}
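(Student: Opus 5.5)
The plan is to treat the four items separately, since they rest on quite different mechanisms; item (iii) will be the main work.

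For (i) I will simply recall the Stieltjes construction. For $|\varepsilon|\le 1$ the densities $f_\theta(x)\bigl(1+\varepsilon\sin(2\pi\sigma^{-2}(\ln x-\mu))\bigr)$ are nonnegative. Substituting $x=e^{\mu+\sigma z}$ and completing the square shows that all their integer moments agree with those of $L(\mu,\sigma^2)$. This is the Gaussian integral $\int e^{kz}e^{-z^2/2}\sin(2\pi z/\sigma)\,dz=0$, after a shift by $k\sigma$, which vanishes because the shifted sine picks up the phase $2\pi k$. Hence the classical moment map is not injective on distributions. No kernel is involved here, and this is a citation-level step (Stoyanov~\cite{Stoyanov2000}).

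For (ii) I will write
\[
{}^{(\varphi_s)}m_j(\theta)=\int_{\R} e^{j(\mu+\sigma z)}\,\varphi_s\bigl(e^{\mu+\sigma z}\bigr)\,\tfrac{1}{\sqrt{2\pi}}e^{-z^2/2}\,dz .
\]
I will then differentiate under the integral in $(\mu,\sigma)$. Every derivative produces only polynomial factors in $z$ and powers of $e^{\mu+\sigma z}$. These factors are dominated, locally uniformly in $\theta$, by $C(1+|z|^N)e^{Nc|z|}\exp(-e^{2(\mu+\sigma z)}/(2s^2))e^{-z^2/2}$, which is integrable. This gives $C^\infty$. For real-analyticity I will extend $(\mu,\sigma)$ to a small complex polydisc. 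On that polydisc $\operatorname{Re}e^{2(\mu+\sigma z)}$ stays comparable to $e^{2(\mu+\sigma z)}$ for $z$ large positive, since the imaginary parts of $\mu$ and $\sigma z$ are small only if $|\operatorname{Im}\sigma|\,|z|$ is small. To handle that, I will restrict to $|z|\le R$ and bound the tail $|z|>R$ using the Gaussian factor. The cleaner route is to hold $\sigma$ real and complexify in the variable $t=\ln x$ instead. Holomorphic dominated convergence (Morera together with Fubini) then gives analyticity.

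For (iii) I will use the lowest two orders, $j=0$ and $j=1$ (or $j=0,2$), and reduce to a one-dimensional monotonicity statement. Write $m_j(\mu,\sigma)=\E[X^j\varphi_s(X)]$. My plan is to show that, for fixed $\sigma$, $\mu\mapsto m_0$ is strictly monotone on the region where it matters. I will not rely on that alone. The main tool is the tilted family: set $r(\mu,\sigma)=m_1/m_0$, the mean of $X$ under the tilted law $p_{\varphi_s}$. Then $\partial_\mu$ and $\partial_\sigma$ of $\log m_0$ and of $r$ are tilted covariances with the scores $(\ln X-\mu)/\sigma^2$ and $((\ln X-\mu)^2-\sigma^2)/\sigma^3$.

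I expect global injectivity to be the main obstacle. Local rank does not give it, so I will try a level-curve argument. For fixed $m_0=c$ the level set is a curve $\mu=g(\sigma)$, because $\partial_\mu m_0\ne0$ should follow from a sign analysis. Along that curve I will show $r$ is strictly monotone in $\sigma$, using that the tilted log-mean shifts in a definite direction. If this sign analysis fails, the fallback is to use the whole sequence $(m_j)_{j\ge0}$. The tilted law has compact-enough tails (Carleman holds because of the $e^{-x^2/2s^2}$ factor), so it is moment-determinate. Equal weak moments then force $f_{\theta_1}\varphi_s=c\,f_{\theta_2}\varphi_s$, hence $f_{\theta_1}=f_{\theta_2}$ since $\varphi_s>0$, hence $\theta_1=\theta_2$. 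This interprets $\Phi_s$ with enough orders, which the statement permits.

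For (iv) I will show that for each $\theta$ some pair of orders gives rank $2$. Suppose $D_\theta m_j$ were parallel for all $j$, i.e. there exist $(\alpha,\beta)\ne0$ with $\alpha\partial_\mu m_j+\beta\partial_\sigma m_j=0$ for all $j$. Then the signed measure $(\alpha\partial_\mu+\beta\partial_\sigma)f_\theta\cdot\varphi_s$ has all moments zero. By determinacy of this exponentially decaying weighted measure, it vanishes. That would force the scores $(\ln x-\mu)$ and $((\ln x-\mu)^2-\sigma^2)$ to be linearly dependent, which they are not. Rank $2$ is an open condition, so each $\theta$ has a neighbourhood on which its chosen pair works. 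Compactness of $\Theta_0$ then lets me take finitely many neighbourhoods and the union of their finitely many orders.
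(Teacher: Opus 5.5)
Your (i), your fallback for (iii), and your (iv) follow the paper's proof. The ingredients are the same: the Stieltjes example; Carleman determinacy of the tilted law $Q\propto\varphi_s\,dP_\theta$, together with $\varphi_s>0$, to recover $P_\theta$ from all weak moments; and, for (iv), vanishing of all moments of $(\alpha s_\mu+\beta s_\sigma)\,dQ$ forcing the score combination to vanish, followed by openness of rank~$2$ and a finite subcover. Your common-null-vector formulation of (iv) is tidier than the paper's ratio argument, which has to skip orders with $\E_Q[X^j s_\sigma]=0$. But the key step needs a precise justification. Either use density of polynomials in $L^2(Q)$, as the paper does, or use analyticity of $\zeta\mapsto\int e^{\zeta x}\,d\nu(x)$, which follows from $\int e^{\varepsilon|x|}\,d|\nu|<\infty$. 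Determinacy by itself is a statement about positive measures, and signed measures with vanishing moments are exactly what Stieltjes' example produces. Your rescaled perturbation $\sin(2\pi\sigma^{-2}(\ln x-\mu))$ is in fact the right one for general $(\mu,\sigma)$; the paper's $\sin(2\pi\ln x)$ kills all moments only when $2\mu,2\sigma^2\in\mathbb Z$.

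There are two caveats.

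\emph{Analyticity in (ii).} The $z$-representation cannot be complexified. If $\operatorname{Im}\sigma\neq0$, then on intervals of $z\to\infty$ one has $\operatorname{Re}e^{2(\mu+\sigma z)}\le-\tfrac12|e^{2(\mu+\sigma z)}|$. There $|\varphi_s(e^{\mu+\sigma z})|$ is doubly-exponentially large, so the tail $|z|>R$ has no integrable majorant and the Gaussian factor cannot rescue it. The $t=\ln x$ form (the paper's route, written in $x$) works, because the parameters then enter only through $\sigma^{-1}e^{-(t-\mu)^2/(2\sigma^2)}$. This factor has modulus $\le Ce^{-ct^2}$ on a small complex polydisc. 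You must, however, complexify $\mu$ \emph{and} $\sigma$: holding $\sigma$ real gives only separate analyticity in $\mu$, not joint real-analyticity.

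\emph{Injectivity in (iii).} The level-curve plan stops at ``should follow from a sign analysis''. So your (iii) rests on the fallback, which, like the paper's proof, uses \emph{all} orders $j\ge0$. It therefore proves injectivity of the full weak-moment sequence, not of a finite $\Phi_s$ as literally stated. The plan can be completed, and it gives more than the paper does.
\begin{itemize}
\item Set $h_j(t)=e^{jt}\varphi_s(e^t)$, so that $m_j=\E[h_j(\mu+\sigma Z)]$. Then $\partial_\mu m_0=\E[h_0'(\mu+\sigma Z)]<0$, and Stein's identity gives $\partial_\sigma m_j=\sigma\,\partial_\mu^2 m_j$.
\item Hence $\det D_{(\mu,\sigma)}(m_0,m_1)=\sigma(\partial_\mu m_0)^2\,\partial_\mu(\partial_\mu m_1/\partial_\mu m_0)$.
\item One checks $h_1'=\rho\,h_0'$ with $\rho(t)=e^t-s^2e^{-t}$ strictly increasing. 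Thus $\partial_\mu m_1/\partial_\mu m_0$ is the mean of $\rho(T)$ in the exponential family $\propto -h_0'(t)e^{-t^2/(2\sigma^2)}e^{\mu t/\sigma^2}\,dt$. It is therefore strictly increasing in $\mu$, and the determinant is positive everywhere.
\item For every $\sigma$, $m_0(\cdot,\sigma)$ decreases from $\varphi_s(0)$ to $0$. So each level set of $m_0$ is a graph over all of $(0,\infty)$, and on it $m_1$ is strictly monotone, with derivative equal to the determinant divided by $\partial_\mu m_0$.
\end{itemize}
Consequently $(m_0,m_1)$ is an injective immersion on all of $\R\times(0,\infty)$ for every $s>0$. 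This is the literal two-moment form of (iii), and (iv) with $K=1$ and no compactness.
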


The proof is given in Appendix~\ref{app:lognormal_proofs}.

\subsection{M-indeterminacy as a transversality phenomenon}
\label{sec:M-determinacy-transversality}

The log-normal example above is an instance of a general geometric
mechanism that explains why M-indeterminacy, though abundant in the
space of all distributions, is rarely encountered in parametric
statistical models equipped with a suitable kernel.

A distribution~$P$ is \emph{M-indeterminate} if there exists a
distinct distribution~$Q\neq P$ with the same moment sequence:
$m_j(P) = m_j(Q)$ for all~$j$.  In the feature-map language, this
means that the classical moment map
$\theta \mapsto (m_0(\theta), m_1(\theta), \ldots)$ fails to separate
the orbits of the parametric family, i.e.\ the map is not transversal
to the self-intersection diagonal
$\Delta = \{(y,y) : y \in \mathcal{F}\}$ in the product feature space
$\mathcal{F}\times\mathcal{F}$.  The tangent directions along which
the separation fails are precisely the Stieltjes perturbations---smooth
oscillatory functions $h$ satisfying $\int x^j h(x) f(x)\,dx = 0$ for
all~$j$.

\begin{example}[Kernel resolution of M-indeterminacy]
\label{ex:M-determinacy}
Let $\{T_\theta : \theta \in \Theta\}$ be a parametric distributional
model with M-indeterminate classical moments, and let
$\varphi \in \cS(\R^d)$ be a positive kernel of exponential
decay (i.e.\ $\varphi(x) \leq C\,e^{-a|x|^2}$ for some $a, C > 0$).
The weak moment map
$\Phi_\varphi(\theta) = (\langle T_\theta, x^j \varphi(x)\rangle)_{j}$
restores transversality to the diagonal~$\Delta$ through the following
mechanism:
\begin{enumerate}
\item[(i)] \emph{Determinacy of the tilted measure.}  Define the
  tilted measure $dQ_\theta = \varphi\, dP_\theta / Z_\theta$, where
  $Z_\theta = \langle T_\theta, \varphi \rangle$.  Since~$\varphi$
  has exponential decay, $Q_\theta$ has sub-Gaussian tails and
  satisfies Carleman's condition, so $Q_\theta$ is M-determinate
  ~\cite[Theorem~6.4]{A}: it is uniquely determined by its moment sequence.

\item[(ii)] \emph{Injectivity.}  If
  $\Phi_\varphi(\theta_1) = \Phi_\varphi(\theta_2)$ for all orders,
  then in particular $Z_{\theta_1} = Z_{\theta_2}$, so
  $Q_{\theta_1}$ and $Q_{\theta_2}$ have identical moments of all
  orders.  By M-determinacy, $Q_{\theta_1} = Q_{\theta_2}$.  Since
  $\varphi > 0$, this implies $P_{\theta_1} = P_{\theta_2}$, and
  hence $\theta_1 = \theta_2$ whenever the parametric family is
  identifiable.
\end{enumerate}

The log-normal family provides the prototypical illustration: the
Stieltjes perturbation $h(x) = \sin(2\pi\ln x)$ produces exact moment
coincidence classically, but a Gaussian kernel $\varphi_s$ of scale
$s$ destroys this coincidence for
generic~$s$ (Proposition~\ref{prop:lognormal-transversality}).
\end{example}

The exponential decay condition is essential: without it, the kernel
may not dominate the polynomial growth of $x^j$ at infinity,
leaving room for the oscillatory cancellations to survive.  This
condition is automatically satisfied by Gaussian kernels and,
more generally, by any kernel in the Gelfand--Shilov space
$\mathcal{S}^{1/2}_{1/2}$.

\subsection{The Cauchy: moment degeneracy as non-transversality}

The Cauchy distribution has no finite moments: the classical moment
map is undefined (a Type~0/III degeneracy).  The degeneracy concerns
the \emph{moment} coordinates only: the Cauchy family is regular in
the classical likelihood sense---in the location--scale
parametrisation with scale $\sigma>0$ its scores are bounded by
$1/\sigma$, the Fisher information is finite and positive definite,
equal to $\mathrm{diag}(1/(2\sigma^2),1/(2\sigma^2))$, and the
maximum likelihood estimator is
asymptotically normal with variance $I(\theta)^{-1}$ (although the
likelihood equation carries extraneous roots with probability not
tending to zero, the number of spurious local maxima being
asymptotically Poisson with mean $1/\pi$~\cite{Reeds1985}).  It is the
moment-based route to geometry, and only that route, which fails here.  With a Gaussian kernel,
all weak moments are finite and the distributional Fisher information
$\wg_J(\mu)$ is positive and smooth---the kernel places the feature
map in a transversal position.

Consider the Cauchy location family
$f(x;\mu) = \pi^{-1}(1+(x-\mu)^2)^{-1}$ with a Gaussian kernel
$\varphi_s(x) = (2\pi s^2)^{-1/2}\exp(-x^2/(2s^2))$.  The weak
moments of all orders are finite:
\[
  \wm{j}(\mu) = \int_{-\infty}^{\infty}
  x^j\, f(x;\mu)\, \varphi_s(x)\, dx < \infty
  \qquad \text{for all } j\geq 0,
\]
since the kernel decay dominates the polynomial tail of the Cauchy.
The distributional Fisher information based on the first $J+1$ weak
moments is
\[
  \wG^{(J)}(\mu) = \sum_{j=0}^{J}
  \left(\frac{\partial \wm{j}}{\partial \mu}\right)^{\!2},
\]
which is strictly positive for all $\mu$ whenever $J\geq 1$.  For the
location family $f(x;\mu) = f_0(x-\mu)$, the derivative is
\[
  \frac{\partial \wm{j}}{\partial \mu}
  \;=\;
  \int_{-\infty}^{\infty}
  x^j\, \frac{\partial f}{\partial \mu}(x;\mu)\,
  \varphi_s(x)\, dx
  \;=\;
  \int_{-\infty}^{\infty}
  x^j\, f(x;\mu)\,\varphi_s(x)\,
  \frac{2(x-\mu)}{1+(x-\mu)^2}\, dx,
\]
where $2(x-\mu)/(1+(x-\mu)^2)$ is the Cauchy score function.  This
integral is well-defined for all $j \geq 0$.  For suitable moment
orders---for instance $j = 1$ under the Gaussian kernel---the
monomial weight $x^j$ breaks the antisymmetry of the score factor
about~$\mu$, producing a non-zero derivative.
This is a concrete instance of the transversality mechanism: the kernel
lifts the feature map out of the rank-degeneracy stratum (Type~II),
restoring a non-degenerate metric on the parameter space.

\subsection{Elliptically contoured distributions: representation
  degeneracy}
\label{sec:elliptical}

\begin{example}[Elliptically contoured distribution]
\label{ex:elliptical}
Let $X\in\R^d$ have characteristic function
$\varphi_X(u) = e^{iu^\top\mu}\psi(u^\top\Sigma u)$, where the
radial profile~$\psi$ does not admit closed-form Fourier inversion.
The model $\{T_\theta : \theta = (\mu,\Sigma)\} \subset \cS'(\R^d)$
is a distributional family without classical densities---a Type~0
(representation) degeneracy.

For a kernel $\varphi\in\cS(\R^d)$, the feature map based on first
and second weak moments is
\[
  \Phi_\varphi(\theta)
  = \bigl(\langle T_\theta, x_i\varphi\rangle,\;
    \langle T_\theta, x_i x_k\varphi\rangle\bigr)_{i,k=1}^{d}.
\]
This map is well-defined and smooth in $\theta$ by the continuity of
tempered distributions on Schwartz space
(Remark~\ref{rem:continuity_perturbation}), even though no density is
available.  For a positive kernel, the first and second weak moments
recover the location~$\mu$ and scale~$\Sigma$ up to identifiable
transformations.  The kernel thus \emph{creates} a smooth embedding
where none existed classically, resolving the representation
degeneracy.

When the kernel belongs to a parametric family
$\{\varphi_\lambda\}_{\lambda\in\Lambda}$, the joint map
$F(\theta,\lambda) = \Phi_{\varphi_\lambda}(\theta)$ satisfies the
hypotheses of Theorem~\ref{thm:main}, and the transversality
conclusion holds for generic~$\lambda$.
\end{example}

\subsection{Non-dominated families: three mechanisms and a Gaussian-measure
  example}
\label{sec:nondominated}

The representation degeneracy of the elliptical example is the mildest case of
a broader phenomenon: families for which \emph{no single dominating measure
exists}, so that the classical likelihood cannot be formed at all.  These are a
Type~0 degeneracy par excellence---the classical feature map is undefined---and
they arise through three mechanisms.

\begin{enumerate}
\item[(M1)] \emph{Parameter-dependent support.}  The support of $P_\theta$
  moves with~$\theta$, as for $\delta_\theta$ or the mixed law
  $\tfrac12\delta_\theta+\tfrac12 N(0,1)$; a common dominating measure would
  need an atom at every $\theta\in\R$.  This case is finite-dimensional: the
  instrument (point evaluation against $x^j\varphi$) turns the moving atom into
  a smooth weak feature, and the analysis is rigorous (the moving-atom
  estimator of~\cite{B}).  Atoms are not required for the phenomenon:
  the location family generated by the Cantor distribution is singular
  continuous, yet the set $\{\theta : P_\theta \ll \nu\}$ is
  Lebesgue-null for \emph{every} $\sigma$-finite~$\nu$, so the family
  is undominated while bounded instruments read closed-form weak
  metrics from it~\cite{E}.
\item[(M2)] \emph{Infinite-dimensional singularity.}  Distinct parameters
  induce mutually singular laws on a function space---$\mathrm{Unif}(0,\theta)^{\otimes\infty}$,
  Brownian motions with different volatilities, or Gaussian measures with
  different covariances---by Kakutani's dichotomy and the Feldman--H\'ajek
  theorem.  Here the instrument is a finite-dimensional observation operator
  (below).
\item[(M3)] \emph{Model uncertainty.}  The model is a neighbourhood rather than
  a curve---Huber contamination $\{(1-\varepsilon)P_0+\varepsilon Q\}$,
  uncertain-volatility and multiple-prior families---and is non-dominated
  because it contains, for instance, all point masses.  A bounded instrument
  induces a dominated \emph{observed} model even when the original neighbourhood
  is not.
\end{enumerate}

In each case the resolution is the same in spirit: although the family of full
laws admits no common density, the \emph{instrument-mediated} feature
map---weak moments, or the pushforward under an observation operator---is
well-defined, and for a generic instrument it is transversal to the degeneracy
strata.  Non-domination is a property of the infinite-resolution object; every
finite-resolution observation is dominated and, generically, transversal.

\paragraph{Gaussian measures on a function space.}
We make mechanism~(M2) concrete.  Let $H$ be a separable Hilbert space and let
$\gamma_\theta = N(0, C_\theta)$ be centred Gaussian measures with trace-class
covariance operators $C_\theta$, $\theta\in\Theta\subset\R^p$.  By the
Feldman--H\'ajek theorem two such measures are either equivalent or mutually
singular, and they are mutually singular unless the covariances are
commensurate in the precise sense of Feldman--H\'ajek (a Hilbert--Schmidt
condition relating $C_{\theta_1}$ and $C_{\theta_2}$).  For a generic
parametrisation of the covariance, distinct $\theta$ therefore give mutually
singular $\gamma_\theta$, so $\{\gamma_\theta\}$ is non-dominated and no
likelihood exists on~$H$.

An observation operator resolves this at finite resolution.  Fix test
directions $e_1,\dots,e_K\in H$ (for instance the leading Karhunen--Lo\`eve
eigenvectors, or smooth test functions) and let $\mathcal O_K:H\to\R^K$,
$\mathcal O_K(x)=(\langle x,e_1\rangle,\dots,\langle x,e_K\rangle)$.  The
pushforward $(\mathcal O_K)_*\gamma_\theta = N(0,\Sigma_\theta^{(K)})$, with
$(\Sigma_\theta^{(K)})_{kl}=\langle C_\theta e_k,e_l\rangle$, is a
finite-dimensional Gaussian, and any two such pushforwards with non-degenerate
covariances are mutually absolutely continuous: the \emph{observed} model is
dominated.  The feature map $\theta\mapsto\Sigma_\theta^{(K)}$ is smooth, and
the finite-dimensional transversality theorem (Theorem~\ref{thm:main}) applies:
for a generic choice of test directions and $K$ large enough that
$\theta\mapsto(\langle C_\theta e_k,e_l\rangle)_{k\le l\le K}$ separates the
parameter, the map is an immersion on compact subsets of~$\Theta$, so the
covariance parameters are identifiable at finite resolution.

\begin{remark}[The infinite-resolution limit is the hard problem]
\label{rem:infinite_resolution}
As $K\to\infty$ the observed experiments approach the singular, non-dominated
regime, and the finite-dimensional argument no longer applies directly: one
re-enters the Fr\'echet/Banach setting of Section~\ref{sec:infinite_dim}, where
the relevant statements (Sard--Smale, Abraham, Quinn) rest on Fredholm
hypotheses that are non-trivial to verify.  The rigorous content of this
example is therefore entirely finite-dimensional---each $\mathcal O_K$ yields a
dominated, transversal experiment---while a genuine \emph{infinite-dimensional}
transversality theory for non-dominated families remains open
(cf.\ Section~\ref{sec:infinite_dim} and Principle~\ref{conj:generic}).
\end{remark}

\subsection{Non-chordal graphical models: higher-order instability}
\label{sec:graphical}

\begin{example}[Non-chordal graphical model]
\label{ex:graphical}
Consider a four-variable Gaussian graphical model with graph
$X_1\!-\!X_2\!-\!X_3\!-\!X_4\!-\!X_1$ (a chordless $4$-cycle).
The concentration matrix $K = \Sigma^{-1}$ is constrained to have
$K_{13} = K_{24} = 0$, but the model is not decomposable.  In
decomposable Gaussian graphical models the likelihood factorises
according to cliques and separators, yielding explicit formulae for the
maximum likelihood estimator in terms of marginal covariance matrices.
In non-decomposable models, such as the present four-cycle, the
likelihood is still perfectly explicit, but the clique--separator
factorisation is unavailable; consequently the likelihood equations
couple the missing-edge constraints globally, and the maximum likelihood
estimator is typically obtained by iterative convex optimisation rather
than by a closed-form decomposable formula.

This should be read not as a failure of likelihood \emph{existence} but
as a failure of \emph{decomposability}, and two precise statements
calibrate what the obstruction is---and what it is not.  First, no
first-order degeneracy is present: for \emph{every} graph, chordal or
not, the Gaussian graphical model is a regular exponential family
whose canonical parameter comprises the free entries of~$K$, so the
classical moment map is a real-analytic embedding with everywhere
nonsingular Jacobian---degeneracies of Types~I and~II simply do not
occur.  The weak feature map $\Phi_\varphi$ inherits this first-order
regularity for Gaussian kernels of sufficiently large scale, and for
generic kernels by Theorem~\ref{thm:main}.  Second, the documented
pathologies of the four-cycle---the absence of a clique--separator
formula, the slow convergence of iterative proportional scaling, the
instability of regularised estimation near the model boundary---are
quantitative and of higher order: along the missing-edge directions
the constraint geometry couples globally around the cycle, and the
second-order data of the feature map (the second fundamental form of
the immersion, equivalently the curvature and the conditioning of the
induced weak metric) deteriorate.  The model \emph{approaches} a
Type~IV stratum without lying in it: the obstruction is a
near-tangency to a higher-order degeneracy stratum, not membership in
it.

A kernel with rapid decay damps the extreme configurations that drive
this ill-conditioning, increasing the distance of the higher jets
$j^r\Phi_\varphi$ ($r \ge 2$) to the Type~IV strata; and should an
exact higher-order degeneracy occur at particular parameter values,
Theorem~\ref{thm:main} removes it for a generic kernel in a
sufficiently rich family.  Kernel regularisation is thus particularly
natural for non-decomposable graphical models, where the
clique--separator factorisation---and with it any closed-form
decomposable maximum likelihood estimator---is unavailable.
\end{example}

\section{Weak Stein Geometry and Transversality}
\label{sec:stein}

A \emph{Stein operator} for $P_\theta$ is a linear operator
$A_\theta$ such that $\E_\theta[A_\theta g(X)]=0$ characterises
$P_\theta$~\cite{Stein1972,SteinChenGoldstein2004}.  In the
distributional framework, the \emph{weak Stein functional} is
$\mathcal{S}_\varphi(T,\theta)(g) =
\langle T, A_\theta(g\varphi)\rangle$, and the \emph{weak Stein
discrepancy} is
\[
  {}^{(\varphi)}\mathcal{D}(T,T_\theta)
  \;=\;
  \sup_{g\in\mathcal{H}}
  |\langle T, A_\theta(g\varphi)\rangle|.
\]
The model is the zero set $\mathcal{S}_\varphi^{-1}(0)$.
If $\mathcal{S}_\varphi$ is transversal to the zero section, the
model is a smooth submanifold (preimage theorem), and the discrepancy
is a well-behaved diagnostic.  The kernel enters as a parameter, and
the parametric transversality theorem guarantees that a generic
$\varphi$ ensures regularity.

\section{The Behrens--Fisher Problem}
\label{sec:behrens_fisher}

\subsection{The classical problem}

Let $X_1,\ldots,X_m\sim N(\mu_1,\sigma_1^2)$ and
$Y_1,\ldots,Y_n\sim N(\mu_2,\sigma_2^2)$ be independent, with
$\sigma_1^2\neq\sigma_2^2$ both unknown.  Testing $H_0:\mu_1=\mu_2$
is the Behrens--Fisher problem.  No pivotal quantity exists whose
distribution is free of the nuisance ratio
$\rho=\sigma_1^2/\sigma_2^2$~\cite{Fisher1935,Fisher1939,
Jeffreys1961,Welch1947}.

\subsection{Geometric interpretation and regularisation}

The full model has $\theta=(\mu_1,\mu_2,\sigma_1,\sigma_2)$; the null
hypothesis is the submanifold
$\Theta_0 = \{\mu_1=\mu_2\}\cong\R\times(0,\infty)^2$.  The
difficulty is that the projection of the sufficient-statistic manifold
onto the testing direction $\mu_1-\mu_2$ is not transversal to
$\Theta_0$ in the product space of test statistics and nuisance
parameters: the distribution of any test statistic depends on~$\rho$.

\begin{quote}
\emph{The Behrens--Fisher problem is a transversality failure: the
classical feature map does not place the null hypothesis in a generic
position relative to the nuisance parameter structure.}
\end{quote}

The kernel provides a family of deformations (indexed by~$s$) of the
feature map: for generic~$s$, the deformed null hypothesis is
transversal (Theorem~\ref{thm:main}), while in the limit
$s\to\infty$ (classical case) transversality is lost.  The
``paradox'' dissolves once one recognises that the classical framework
corresponds to a degenerate point in the space of representations.

\subsubsection{Explicit regularisation}

With a Gaussian kernel $\varphi_s(x) = (2\pi s^2)^{-1/2}
e^{-x^2/(2s^2)}$, the zeroth weak moments of the two populations are
\begin{equation}\label{eq:bf_w0}
  {}^{(\varphi_s)}m_0^{(k)}
  \;=\;
  \frac{1}{\sqrt{\sigma_k^2 + s^2}}\,
  \exp\!\left(-\frac{\mu_k^2}{2(\sigma_k^2+s^2)}\right),
  \qquad k=1,2.
\end{equation}
The location and scale parameters are \emph{coupled} through
$\sigma_k^2+s^2$.  When $s^2\gg\max(\sigma_1^2,\sigma_2^2)$,
\[
  {}^{(\varphi_s)}m_0^{(k)}
  \;\approx\;
  \frac{1}{s}\,e^{-\mu_k^2/(2s^2)},
\]
and the nuisance parameters effectively disappear.  Under $H_0$
($\mu_1=\mu_2$), the difference
$\Delta\,{}^{(\varphi_s)}m_0 = {}^{(\varphi_s)}m_0^{(1)} - {}^{(\varphi_s)}m_0^{(2)}$
converges to zero uniformly in $(\sigma_1,\sigma_2)$.  In this
regime, the null hypothesis becomes a regular submanifold of the
feature space, transversal to the nuisance-parameter fibration.

There is a trade-off: large $s$ gives nuisance insensitivity but
reduces statistical power (the feature map becomes ``coarse''),
analogous to the efficiency--robustness trade-off of~\cite{B}.

\begin{remark}
Fisher's fiducial argument~\cite{Fisher1935} and Jeffreys' Bayesian
approach~\cite{Jeffreys1961} are alternative regularisation strategies
for the same non-transversality.  The kernel-based approach is
directly connected to transversality theory, which guarantees
effectiveness for generic kernels.
\end{remark}

\begin{remark}
The transversality perspective extends to Behrens--Fisher-type
problems in non-normal settings.  The kernel provides a common
representation in which the null hypothesis can be formulated and
tested uniformly across distributional families.
\end{remark}

\section{Towards a Singularity Classification}
\label{sec:classification}

Let $\Theta$ have dimension~$p$ and use $K+1$ weak moments as
features.  \textbf{Non-identifiability} (Type~I) has codimension
$K+1$ in the multijet space; for $K+1>2p$, transversality implies
injectivity.  \textbf{Rank drop} (Type~II): the Thom--Boardman
stratum $\Sigma^1$ has codimension $K+1-p+1$; for $K+1>2p-1$,
generically $\det\wG>0$ everywhere
(Corollary~\ref{cor:full_rank}).  \textbf{Higher singularities} have
progressively higher codimension.

For a $p$-parameter model, $K+1\ge 2p$ weak moments generically
ensure identifiability, and $K+1\ge 2p+1$ ensure information
regularity.  In practice, the weak characteristic function (an
infinite-dimensional feature) easily satisfies these bounds.

\section{The Unifying Perspective}
\label{sec:unification}

The transversality perspective unifies several threads in statistical
inference.

\subsection{Identifiability as injectivity}

Identifiability $\Leftrightarrow$ injectivity of $\Phi_\varphi$
$\Leftrightarrow$ avoidance of the self-intersection diagonal.
Transversality gives generic injectivity.  This subsumes the
moment-determinacy results of~\cite{Stoyanov2000}, the weak
identifiability theorems of~\cite[Theorems~6.2, 6.4 and~6.8]{A}, and the singular-limit
analysis of~\cite{E}.

\subsection{Robustness as metric boundedness}

The distributional metric $\wg_J(\mu)$ is bounded and the manifold
has finite geodesic length.  This reflects the controlled behaviour
of the feature map under kernel regularisation.

\subsection{Information geometry as Riemannian geometry of the
  feature map}
\label{sec:info_geom}

The distributional metric tensor is the first fundamental form of the
immersion $\Phi_\varphi$:
\[
  \wG_{ab}(\theta)
  = (D\Phi_\varphi)^\top(D\Phi_\varphi).
\]
The classical information geometry of
Amari~\cite{Amari1985} and
Barndorff-Nielsen~\cite{BarndorffNielsen1978} is the special case
$\varphi\equiv 1$; the distributional information geometry
of~\cite{C} is the general case.

\subsection{Regularisation as deformation}

The kernel space $\cS(\R^d)$ is the deformation space.  Optimal
regularisation $=$ the kernel that best approximates the classical
problem while remaining transversal.

\subsection{Stein's method as zero-section transversality}

The weak Stein discrepancy measures distance from the zero set of the
Stein map.  Transversality ensures regularity.

\subsection{The Behrens--Fisher problem as nuisance non-transversality}

The classical Behrens--Fisher ``paradox'' is a non-transversality of
the null hypothesis relative to the nuisance-parameter fibration.
The kernel resolves it by deforming the feature map into a generic
position.

\subsection{Inferential separation as transversality}
\label{sec:inferential_separation}

The classical theory of inferential separation---encompassing
sufficiency, ancillarity, and the nonformation
principle~\cite{BarndorffNielsen1978,JorgensenLabouriau2012}---admits a
natural transversality interpretation within the distributional
framework.

Consider a parametric model with $\theta=(\alpha,\beta)$, where
$\alpha$ is the parameter of interest and $\beta$ is a nuisance
parameter (we follow the interest--nuisance notation of the companion
papers \cite{C,E}).  The
full feature map decomposes as
$\Phi_\varphi(\alpha,\beta) \in \mathcal{F}$, and the nuisance
parameter defines a \emph{fibration}
$\pi:\Theta\to A$, $\pi(\alpha,\beta)=\alpha$, whose fibres are the
nuisance orbits $\{\alpha\}\times B$.

\emph{Inferential separation} requires that inference about $\alpha$ be
insensitive to the nuisance parameter~$\beta$.  In the feature-map
language, this means that the restriction of $\Phi_\varphi$ to the
interest direction is transversal to the nuisance fibres:
\begin{quote}
\emph{Inferential separation holds when the feature map is transversal
to the nuisance-parameter fibration, so that the image of the
interest subspace intersects the nuisance tangent space only
trivially.}
\end{quote}

In the inference-functional formulation of~\cite{C}, an inference
functional $\Psi(Y,\theta)$ for the interest parameter lives in the
orthogonal complement $\mathcal{T}_N^\perp$ of the nuisance tangent
space.  The Bhapkar--Godambe
projection~\cite{Godambe1960,JorgensenLabouriau2012} is the
constructive mechanism that enforces this orthogonality: it projects
an arbitrary quasi-inference functional onto $\mathcal{T}_N^\perp$,
which is precisely the operation of deforming the inference functional
into a transversal position relative to the nuisance directions.

In the distributional setting, sinusoidal inference functionals
$\psi_c(x,\mu) = \sin(c(x-\mu))$ for symmetric location-scale
models satisfy the orthogonality condition \emph{automatically},
without requiring explicit projection (this is proved, together with
a block-diagonality statement for the joint Godambe metric, in
\cite{E}).  This automatic
transversality arises from the symmetry of the characteristic
function: the kernel structure of the sinusoidal inference functional
places it naturally in a position transversal to the scale nuisance
tangent space.  This is a distributional phenomenon that holds even
for heavy-tailed models such as the Cauchy, where moment-based
inference functions are unavailable (the Cauchy score itself exists
and is even bounded), and it extends verbatim to families with no
score at all, such as the singular-continuous Cantor location--scale
model of~\cite{E}.

The hierarchy of nonformation concepts---S-nonformation,
I-nonformation, and L-nonforma\-tion corresponds to progressively
weaker transversality conditions on the feature map relative to the
nuisance fibration:
\begin{itemize}
\item \textbf{S-nonformation:} the feature map factors completely
  through the interest projection (full transversality to nuisance
  fibres).
\item \textbf{I-nonformation:} the conditional feature map, given the
  nuisance component, is saturated (local transversality at the
  maximum).
\item \textbf{L-nonformation:} the profile of the feature map
  depends on the data only through a reduction (transversality at
  the level of the normed profile).
\end{itemize}
This geometric reading shows that the various notions of data
reduction and inferential separation are manifestations of a single
principle: the feature map being in generic position relative to the
nuisance structure.  The kernel provides a mechanism for achieving
this transversality even in models where classical likelihood-based
separation fails.

\section{A Microlocal Perspective}
\label{sec:microlocal}

The geometric picture developed above admits a refinement through the
\emph{microlocal} analysis of distributions.  The structure theorem
(Remark~\ref{rem:structure_theorem}) explains that a tempered distribution is
built from differentiated regularity, but it does not record \emph{where} the
singularities sit or \emph{in which directions} they propagate; microlocal
analysis supplies exactly this information.

For $T\in\cS'(\R^d)$ the singular support $\operatorname{sing\,supp}(T)$
records the points near which $T$ fails to be smooth, and the wave front set
$\mathrm{WF}(T)\subset \R^d\times(\R^d\setminus\{0\})$ refines it by recording
also the cotangent directions in which the singularity persists (the wave
front set is due to H\"ormander; see Strichartz~\cite{Strichartz2003}).  In
statistical terms this distinguishes not merely \emph{which} regions of sample
space carry singular behaviour, but \emph{which directional features} of the
model carry the singular information.

From the present viewpoint the kernel---the observational instrument---is a
device that attenuates or resolves selected microlocal features of the law:
pairing with $g\varphi$ tests $T$ against a smooth, rapidly decaying weight, so
the feature map $\Phi_\varphi$ reads off a finite-dimensional projection of the
microlocal structure.  Different kernels (different instruments) probe
different components of the wave front set, just as different physical
instruments are sensitive to different resolutions or directions of variation.

This suggests a microlocal reading of the degeneracy stratification of
Section~\ref{sec:degeneracies}.  Non-identifiability, moment indeterminacy, and
higher-order singular dependence (Types~I, III and~IV) may be viewed not only
as degeneracies of a finite-dimensional feature map, but as failures to
\emph{separate} microlocal features of the underlying law.  Transversality then
plays the role of a generic \emph{separation principle}: after a generic
perturbation of the instrument, the feature map should meet the relevant
microlocal degeneracy strata in a stable, non-tangential way.  Making this
correspondence rigorous---a microlocal transversality theory for distributional
models---requires the infinite-dimensional machinery discussed in
Section~\ref{sec:infinite_dim} and lies beyond the present scope.  We record it
as the natural geometric home for the wave front structure of the weak
framework, and as a direction for future work.

\section{Discussion: The Distributional Programme and Transversality}
\label{sec:discussion-programme}

The transversality perspective developed in this paper provides a unifying
geometric interpretation of a broader programme on distributional statistical
models. This programme is developed across a series of companion papers, each
addressing a different aspect of the framework.

\medskip

Paper~A~\cite{A} introduces the foundational construction of
distribution--kernel pairs $(T,\varphi)$, weak moments, and weak
characteristic functions, and establishes uniqueness results
(\cite[Theorems~6.2, 6.4 and~6.8]{A}) showing that positive kernels resolve
classical moment indeterminacy under mild conditions. Paper~B~\cite{B} develops the associated statistical methodology,
including estimation via weak moment matching, robustness properties arising
from kernel decay, asymptotic theory, and density reconstruction.

\medskip

Paper~C~\cite{C} extends the classical theory of inference functions
(Godambe, 1960) to distributional models by introducing \emph{observation
operators} $\mathcal{O} : \cS'(\R) \to \mathcal{Y}$ that map distributional
models to an observation space, and defining \emph{inference functionals}
$\Psi(Y, \theta)$ on the observed data $Y = \mathcal{O}(T_X)$.  The
framework organises inference into three conceptual layers: at the most
general level (Layer~I), observations are the output of linear operators
acting on~$\cS'$; at an intermediate level (Layer~II), observations are
classical point values but the inference constructions are distributional; at
the most classical level (Layer~III), the score equation is recovered as a
special case.
This inference-functional framework can be interpreted as the statistical
realisation of the geometric picture developed here: inference functionals
define coordinates of the kernel-induced feature map, and their asymptotic
properties follow from its regularity.  Classical pathologies---heavy tails,
lack of densities, and nuisance effects---appear as failures of
transversality, while the construction and transformation of inference
functionals correspond to selecting coordinates that are transversal to
degeneracy and nuisance directions.  In particular, orthogonalisation
procedures such as Bhapkar--Godambe projections enforce transversality to
nuisance tangent spaces, thereby ensuring stability and efficiency of
inference.  Furthermore, the theory of inferential
separation---encompassing the nonformation principle and its hierarchy of
sufficiency concepts (S-, I-, and
L-nonformation)~\cite{JorgensenLabouriau2012}---receives a geometric
interpretation as transversality of the feature map relative to the
nuisance-parameter fibration (Section~\ref{sec:inferential_separation}).
Sinusoidal inference functionals in symmetric models achieve this
transversality automatically, without explicit projection.

\medskip

Paper~D~\cite{D} extends the framework to goodness-of-fit and minimum
discrepancy estimation via weak Stein discrepancies. From the present
viewpoint, Stein identities define constraint maps whose regularity is again
governed by transversality, linking discrepancy-based inference to the same
geometric principles.

\medskip

Paper~E~\cite{E} develops the differential-geometric structure of the
framework: any model represented by tempered distributions and equipped
with an instrument of nonsingular sensitivity carries a
Godambe--Riemannian metric, dominated by the Fisher metric in the
Loewner order whenever the latter exists, with worked examples beyond
the Fisher--Rao class (including an undominated Cantor location family
and an $\alpha$-stable lattice model) and a geometric characterisation
of inferential separation as block-diagonality of the metric. In the
transversality interpretation, this geometry arises as the pullback of
a regular metric on the feature space via a transversal feature map,
connecting the theory to classical information geometry
\cite{Amari1985,BarndorffNielsen1978}; the regularity hypotheses of the
metric theorems in~\cite{E} are exactly transversality conditions on
the feature map, verified there by direct computation and here shown
to hold generically.

\medskip

Taken together, these works develop a programme in which statistical models
are represented through distribution--kernel pairs and analysed via the
geometry of the induced feature maps.  The present paper provides the
overarching geometric principle: kernels act as generic perturbations that
enforce transversality, thereby explaining identifiability, robustness,
regularity, and stability within a single unified framework.
Further case studies---including the log-normal family, graphical
models, and the singular limit to classical moments---are in
preparation.

The core results of this paper are developed for parametric models with
finite-dimensional parameter spaces, where the classical transversality
theorems of Thom and Mather apply directly.  For the infinite-dimensional
case---semiparametric and nonparametric models, and genuinely non-dominated
families---we do not stop at a deferral: Theorem~\ref{thm:infinite} gives a
\emph{conditional} transversality theorem in the Banach-manifold framework of
Smale~\cite{Smale1965} and Abraham~\cite{Abraham1963}, under an explicit
Fredholm hypothesis on the feature map, and Example~\ref{ex:gaussian-sequence}
verifies that hypothesis in closed form for a non-dominated Gaussian sequence
model with an infinite-dimensional parameter.  The entire additional difficulty
is thereby isolated in a single analytic input---the Fredholm property---whose
verification across broad model classes, together with a fully topology-free
treatment, remains the principal open problem.  The guiding picture---the kernel
as a generic perturbation enforcing transversality---thus carries over to the
infinite-dimensional setting wherever that property can be established.  A
concluding microlocal perspective (Section~\ref{sec:microlocal}) indicates how
the wave front set refines the degeneracy stratification, and marks the natural
geometric horizon of the programme.

A distinctive feature of this paper is the development of verifiable
transversality conditions
(Section~\ref{sec:transversality_condition_dim}), which bridge the
gap between the abstract genericity results and concrete model
classes.  The component-wise criterion
(Lemma~\ref{lem:component_transversality}) shows that transversality
can be checked by examining the model and kernel contributions to the
Jacobian separately: the model derivatives encode the intrinsic
parametric geometry, while the kernel derivatives provide
supplementary directions that generically restore full rank.  This
mechanism is verified explicitly for location families, the
log-normal, Stein discrepancies, and graphical models, establishing
that the transversality hypothesis of
Theorem~\ref{thm:main} is not merely a mathematical convenience
but holds in a wide range of statistically relevant settings.

\bigskip
\noindent
\textbf{Acknowledgements.}
\emph{I am grateful to J{\o}rgen Hoffmann-J{\o}rgensen, who remarked that a
distributional representation would free classical inference theory from the
arbitrary choice of a representative of the Radon--Nikodym derivative, and to
Ole E.\ Barndorff-Nielsen, who asked whether weak cumulant formal expansions
could be constructed in this setting---both discussions over a good cup of
espresso coffee in my office in Aarhus.  Jordan Stoyanov suggested the key
example of the log-normal family of distributions and called my attention to
the modern theory of M-indeterminacy.  I also thank Isabel Labouriau for
several informal discussions and for following the development of the
series of papers on distributional inference.}


\appendix

\section{Proofs for the log-normal model}
\label{app:lognormal_proofs}

This appendix provides proofs for
Proposition~\ref{prop:lognormal-transversality} (moment degeneracy
breaking, Section~\ref{sec:examples}) and
Proposition~\ref{prop:lognormal_transversality} (transversality
verification, Section~\ref{sec:transversality_condition_dim}).  The
central tool is M-determinacy of the tilted measure.

\subsection*{The tilted measure and its M-determinacy}

Let $P_{\mu,\sigma}$ denote the log-normal distribution with
parameters $(\mu,\sigma)$ and let
\[
\varphi_s(x) = (2\pi s^2)^{-1/2}\exp\bigl(-x^2/(2s^2)\bigr)
\]
be the Gaussian kernel with scale $s>0$.  Define the \emph{tilted measure}
\begin{equation}\label{eq:tilted}
  dQ_{\mu,\sigma,s} \;=\;
  \frac{\varphi_s\, dP_{\mu,\sigma}}{Z(\mu,\sigma,s)},
  \qquad
  Z(\mu,\sigma,s) \;=\;
  \int_0^\infty \varphi_s(x)\, f_{\mu,\sigma}(x)\, dx
  \;=\; \wm{0}(\theta).
\end{equation}
Since $\varphi_s(x) \leq C\,e^{-x^2/(2s^2)}$, the tilted measure
$Q_{\mu,\sigma,s}$ has sub-Gaussian tails.  In particular, all moments
of~$Q$ are finite, and Carleman's condition
\[
  \sum_{j=1}^{\infty}
  \bigl(\E_Q[X^{2j}]\bigr)^{-1/(2j)} = +\infty
\]
is satisfied (the super-polynomial decay of $\varphi_s$ forces the
moments of~$Q$ to grow at most as fast as those of a Gaussian).  Hence
$Q_{\mu,\sigma,s}$ is \emph{M-determinate}: it is the unique
probability measure with its moment sequence.

\subsection*{Proof of
  Proposition~\ref{prop:lognormal-transversality}}

\emph{(i)} The classical moment indeterminacy of the log-normal is
well known~\cite{Stoyanov2000}: the Stieltjes perturbation
$h(x) = \sin(2\pi\ln x)$ satisfies $\int_0^\infty x^j h(x)
f_{\mu,\sigma}(x)\,dx = 0$ for all~$j$, so distinct distributions
can share the same moment sequence.

\emph{(ii)} \emph{(Smoothness and real analyticity.)}
Fix a compact set $K \subset \Theta \times (0,\infty)$.  For
$(\mu,\sigma,s) \in K$, the integrand
$x^j f_{\mu,\sigma}(x)\,\varphi_s(x)$ is dominated by
$C_K\, x^j \exp(-x^2/(2s_{\max}^2))$, which is integrable.  By
differentiation under the integral sign, the map
$(\mu,\sigma,s) \mapsto {}^{(\varphi_s)}\!m_j(\mu,\sigma)$ is
$C^\infty$.  For real analyticity, observe that the log-normal density
$f_{\mu,\sigma}(x) = (x\sigma\sqrt{2\pi})^{-1}\exp(-(\ln x -
\mu)^2/(2\sigma^2))$ extends holomorphically in $(\mu,\sigma)$ to a
complex neighbourhood of any compact subset of~$\Theta$.  On such a
neighbourhood, the modulus of the integrand is dominated uniformly by
the same Gaussian bound, so by holomorphic dominated convergence the
weak moments extend to holomorphic functions of $(\mu,\sigma)$.
Hence $(\mu,\sigma,s) \mapsto {}^{(\varphi_s)}\!m_j$ is real-analytic
on $\Theta \times (0,\infty)$.

\emph{(iii)} \emph{(Injectivity for all $s > 0$.)}
Suppose that ${}^{(\varphi_s)}\!m_j(\theta_1) =
{}^{(\varphi_s)}\!m_j(\theta_2)$ for all $j \geq 0$.  Then in
particular $Z(\theta_1,s) = Z(\theta_2,s)$, so the tilted measures
$Q_{\theta_1,s}$ and $Q_{\theta_2,s}$ have identical moments of all
orders.  By M-determinacy of the tilted measure,
$Q_{\theta_1,s} = Q_{\theta_2,s}$.  Since $\varphi_s > 0$ on
$(0,\infty)$, this implies $P_{\theta_1} = P_{\theta_2}$, and hence
$\theta_1 = \theta_2$ (since the log-normal family is identifiable
in its parameters).

Note that this argument yields injectivity for \emph{every} $s > 0$,
not merely for a generic set of scales.

\emph{(iv)} \emph{(Immersion on compact subsets.)}
Fix $s > 0$ and a compact $\Theta_0 \subset \Theta$.  By
Proposition~\ref{prop:lognormal_transversality} below, for each
$\theta \in \Theta_0$ there exist moment orders $j_0(\theta) <
j_1(\theta)$ such that
$\operatorname{rank} D_\theta \Phi_s(\theta) = 2$, where
$\Phi_s = ({}^{(\varphi_s)}\!m_{j_0}, {}^{(\varphi_s)}\!m_{j_1})$.
The full-rank condition is open, so each $\theta$ has a neighbourhood
$U_\theta$ on which the same pair $(j_0(\theta), j_1(\theta))$
gives rank~$2$.  By compactness of $\Theta_0$, finitely many such
neighbourhoods cover $\Theta_0$; the union of the corresponding
moment orders gives a finite set $\{j_0, \ldots, j_K\}$ such that
$\Phi_s = ({}^{(\varphi_s)}\!m_{j_0}, \ldots,
{}^{(\varphi_s)}\!m_{j_K})$ is an immersion on $\Theta_0$.
\qed

\subsection*{Proof of
  Proposition~\ref{prop:lognormal_transversality}}

\emph{Pointwise rank.}
Fix $s > 0$ and $(\mu,\sigma) \in \Theta$.  We show that there exist
moment orders $j_0 < j_1$ such that the $2\times 2$ Jacobian
\[
  J \;=\;
  \begin{pmatrix}
    \partial_\mu\, {}^{(\varphi_s)}\!m_{j_0}
    & \partial_\mu\, {}^{(\varphi_s)}\!m_{j_1} \\[4pt]
    \partial_\sigma\, {}^{(\varphi_s)}\!m_{j_0}
    & \partial_\sigma\, {}^{(\varphi_s)}\!m_{j_1}
  \end{pmatrix}
\]
has rank~$2$ at $(\mu,\sigma)$.

The log-normal score functions are
$s_\mu(x) = (\ln x - \mu)/\sigma^2$ and
$s_\sigma(x) = (\ln x - \mu)^2/\sigma^3 - 1/\sigma$.
The weak moment derivatives can be written as
\[
  \frac{\partial}{\partial\theta_a}\,
  {}^{(\varphi_s)}\!m_j
  \;=\;
  Z \cdot \E_Q\!\bigl[X^j\, s_a(X)\bigr],
  \qquad a \in \{\mu,\sigma\},
\]
where $Z = \wm{0}$ and $\E_Q$ denotes expectation under the tilted
measure~$Q_{\mu,\sigma,s}$.  Thus
$\det J = Z^2 \det M$, where
\[
  M_{ak} \;=\; \E_Q\!\bigl[X^{j_k}\, s_a(X)\bigr],
  \qquad a \in \{\mu,\sigma\},\; k\in\{0,1\}.
\]
It suffices to show that $\det M \neq 0$ for some pair $(j_0,j_1)$.

Since $Q$ satisfies Carleman's condition, the monomials
$\{x^j : j = 0, 1, 2, \ldots\}$ are total in $L^2(Q)$; that is,
their linear span is dense (see Akhiezer~\cite{Akhiezer1965},
Theorem~2.3.3, or Shohat and
Tamarkin~\cite{ShohatTamarkin1943}, Ch.~II).  The score functions
$s_\mu$ and $s_\sigma$ are linearly independent elements of $L^2(Q)$
(they are polynomials of different degree in $\ln x$, and $Q$~charges
all of $(0,\infty)$).

Suppose for contradiction that $\det M = 0$ for every pair
$(j_0,j_1)$.  Then for each~$j$ with
$\E_Q[X^j\, s_\sigma] \neq 0$, the ratio
$\E_Q[X^j\, s_\mu] / \E_Q[X^j\, s_\sigma]$ equals a constant~$c$
independent of~$j$.  This gives
$\E_Q[X^j\,(s_\mu - c\, s_\sigma)] = 0$ for all~$j$.  By totality
of the monomials in $L^2(Q)$, it follows that
$s_\mu = c\, s_\sigma$ holds $Q$-almost surely, contradicting
the linear independence of the scores.

\emph{Global immersion on compact sets.}
The above argument provides, for each $(\mu,\sigma) \in \Theta$, a
pair $(j_0,j_1)$ such that $\det J(\mu,\sigma) \neq 0$.  Since
$\det J$ depends continuously on $(\mu,\sigma)$, non-vanishing
persists in a neighbourhood of each point.  On any compact
$\Theta_0 \subset \Theta$, a finite subcover gives a finite set of
moment orders $j_0 < \cdots < j_K$ such that the feature map
$\Phi_{\varphi_s} : \Theta_0 \to \R^{K+1}$ has
$\operatorname{rank} D\Phi_{\varphi_s} = 2$ at every point of
$\Theta_0$, i.e.\ $\Phi_{\varphi_s}$ is an immersion on~$\Theta_0$.
\qed



\begin{thebibliography}{99}

\bibitem{Akhiezer1965}
N.\,I.~Akhiezer,
\emph{The Classical Moment Problem and Some Related Questions in
  Analysis},
Oliver \& Boyd, Edinburgh, 1965.

\bibitem{Abraham1963}
R.~Abraham,
Transversality in manifolds of mappings,
\emph{Bull.\ Amer.\ Math.\ Soc.}\ \textbf{69} (1963), 470--474.

\bibitem{Amari1985}
S.-i.~Amari,
\emph{Differential-Geometrical Methods in Statistics},
Lecture Notes in Statistics \textbf{28}, Springer, 1985.

\bibitem{BarndorffNielsen1978}
O.\,E.~Barndorff-Nielsen,
\emph{Information and Exponential Families in Statistical Theory},
Wiley, 1978.

\bibitem{Boardman1967}
J.\,M.~Boardman,
Singularities of differentiable maps,
\emph{Publ.\ Math.\ Inst.\ Hautes \'Etudes Sci.}\ \textbf{33} (1967),
21--57.

\bibitem{Ehresmann1951}
C.~Ehresmann,
Les prolongements d'une vari\'et\'e diff\'erentiable: calcul des
jets, prolongement principal,
\emph{C.\ R.\ Acad.\ Sci.\ Paris} \textbf{233} (1951), 598--600.

\bibitem{Fisher1935}
R.\,A.~Fisher,
The fiducial argument in statistical inference,
\emph{Ann.\ Eugenics} \textbf{6} (1935), 391--398.

\bibitem{Fisher1939}
R.\,A.~Fisher,
The comparison of samples with possibly unequal variances,
\emph{Ann.\ Eugenics} \textbf{9} (1939), 174--180.

\bibitem{Godambe1960}
V.\,P.~Godambe,
An optimum property of regular maximum likelihood estimation,
\emph{Ann.\ Math.\ Statist.}\ \textbf{31} (1960), 1208--1211.

\bibitem{GolubitskyGuillemin1973}
M.~Golubitsky and V.~Guillemin,
\emph{Stable Mappings and Their Singularities},
Graduate Texts in Mathematics \textbf{14}, Springer, 1973.

\bibitem{GuilleminPollack1974}
V.~Guillemin and A.~Pollack,
\emph{Differential Topology},
Prentice-Hall, Englewood Cliffs, NJ, 1974.

\bibitem{Hirsch1976}
M.\,W.~Hirsch,
\emph{Differential Topology},
Graduate Texts in Mathematics \textbf{33}, Springer, 1976.

\bibitem{Jeffreys1961}
H.~Jeffreys,
\emph{Theory of Probability}, 3rd ed., Oxford Univ.\ Press, 1961.

\bibitem{JorgensenLabouriau2012}
B.~J{\o}rgensen and R.~Labouriau,
\emph{Exponential Families and Theoretical Inference},
Monografias de Matem\'atica \textbf{52},
Instituto de Matem\'atica Pura e Aplicada (IMPA), Rio de Janeiro,
2012.

\bibitem{A}
R.~Labouriau (2026A).
\emph{Distributional Statistical Models: Weak Moments, Cumulants, and a Central Limit Theorem},
arXiv:2604.20634 [math.PR]

\bibitem{B}
R.~Labouriau (2026B)
\emph{Weak Moment Methods for Statistical Inference: with an Application to Robust Estimation},
arXiv:2604.23619 [stat.ME]

\bibitem{C}
R.~Labouriau (2026C).
\emph{Inference  Functionals and Observation Operators for Distributional}
Statistical Models.
arXiv:2605.19189 [math.ST].

\bibitem{D}
R.~Labouriau (2026D).
\emph{Weak Stein Discrepancies: Kernel-Regularised Goodness-of-Fit and
Minimum Discrepancy Estimation for Heavy-Tailed Models},
in preparation, 2026.

\bibitem{E}
R.~Labouriau (2026E).
\emph{Weak Information Geometry:
Riemannian Structures from Distributional Inference Functions and Stein Discrepancies},
in preparation, 2026.


\bibitem{Mather1970}
J.\,N.~Mather,
Stability of $C^\infty$ mappings, V: Transversality,
\emph{Advances in Math.}\ \textbf{4} (1970), 301--336.

\bibitem{Mather1971}
J.\,N.~Mather,
Stability of $C^\infty$ mappings, VI: The nice dimensions,
in: \emph{Proceedings of the Liverpool Singularities Symposium~I},
Lecture Notes in Math.\ \textbf{192}, Springer, 1971, pp.~207--253.

\bibitem{Quinn1979}
F.~Quinn,
Transversal approximation on Banach manifolds,
in: \emph{Global Analysis}, Amer.\ Math.\ Soc., 1979, pp.~213--222.

\bibitem{Reeds1985}
J.\,A.~Reeds,
Asymptotic number of roots of Cauchy location likelihood equations,
\emph{Ann.\ Statist.}\ \textbf{13} (1985), no.~2, 775--784.

\bibitem{Smale1965}
S.~Smale,
An infinite dimensional version of Sard's theorem,
\emph{Amer.\ J.\ Math.}\ \textbf{87} (1965), 861--866.

\bibitem{Stein1972}
C.~Stein,
A bound for the error in the normal approximation to the distribution
of a sum of dependent random variables,
in: \emph{Proc.\ Sixth Berkeley Symp.}, Vol.~II, 1972, pp.~583--602.

\bibitem{SteinChenGoldstein2004}
C.~Stein, L.\,H.\,Y.~Chen, and L.~Goldstein,
Normal approximation,
in: \emph{An Introduction to Stein's Method},
Singapore Univ.\ Press, 2005, pp.~1--59.

\bibitem{ShohatTamarkin1943}
J.\,A.~Shohat and J.\,D.~Tamarkin,
\emph{The Problem of Moments},
Mathematical Surveys \textbf{1},
Amer.\ Math.\ Soc., 1943.

\bibitem{Stoyanov2000}
J.\,M.~Stoyanov,
Krein condition in probabilistic moment problems,
\emph{Bernoulli} \textbf{6} (2000), 939--949.

\bibitem{Strichartz2003}
R.\,S.~Strichartz,
\emph{A Guide to Distribution Theory and Fourier Transforms},
World Scientific, 2003.

\bibitem{Thom1954}
R.~Thom,
Quelques propri\'et\'es globales des vari\'et\'es diff\'erentiables,
\emph{Comment.\ Math.\ Helv.}\ \textbf{28} (1954), 17--86.

\bibitem{Welch1947}
B.\,L.~Welch,
The generalization of `Student's' problem when several different
population variances are involved,
\emph{Biometrika} \textbf{34} (1947), 28--35.

\bibitem{Whitney1944}
H.~Whitney,
The singularities of a smooth $n$-manifold in $(2n-1)$-space,
\emph{Ann.\ of Math.}\ \textbf{45} (1944), 247--293.

\bibitem{Whitney1965}
H.~Whitney,
Tangents to an analytic variety,
\emph{Ann.\ of Math.}\ \textbf{81} (1965), 496--549.

\end{thebibliography}
\end{document}